\newcommand{\tmu}{\tilde{\mu}}
\newcommand{\tC}{\widetilde{C}}
\newcommand{\tH}{\widetilde{H}}
\newcommand{\TC}{{T_\CC}}
\newcommand{\TR}{T}
\newcommand{\GC}{\operatorname{Gr}(2,\CC^5)}
\newcommand{\RR}{\mathbb{R}}
\newcommand{\CC}{\mathbb{C}}
\newcommand{\ZZ}{\mathbb{Z}}
\newcommand{\PP}{\mathbb{CP}}
\renewcommand{\P}{\mathbb{P}}
\newcommand{\CO}{\mathcal{O}}
\newcommand{\CA}{\mathcal{A}}
\newcommand{\partialp}{\partial^{^+}\!}
\newcommand{\D}{\mathbb{D}}
\renewcommand{\S}{\mathbb{S}}
\newcommand{\M}{M_\RR}
\newcommand{\rel}{\stackrel{\scriptscriptstyle \phi}{\sim}}
\newcommand{\hrel}{\stackrel{\scriptscriptstyle F_{ij}}{\sim}}
\DeclareMathOperator{\Eff}{\overline{Eff}}
\DeclareMathOperator{\Nef}{Nef}
\DeclareMathOperator{\Proj}{Proj}
\DeclareMathOperator{\Pic}{Pic}
\DeclareMathOperator{\dP}{dP}
\DeclareMathOperator{\id}{id}
\newtheorem{theorem}{Theorem}[section]
\newtheorem{lemma}[theorem]{Lemma}
\newtheorem{proposition}[theorem]{Proposition}
\theoremstyle{remark}
\newtheorem{remark}[theorem]{Remark}
\theoremstyle{definition}
\title[Toric topology of the Grassmannian of planes in $\CC^5$]{Toric topology of the Grassmannian of planes in $\CC^5$ and the del Pezzo surface of degree $5$}
\author[H. S\"u{\ss}]{Hendrik S\"u\ss}
\address{Hendrik S\"u\ss\\ School of Mathematics,
The University of Manchester,
Alan Turing Building,
Oxford Road
Manchester M13 9PL}
\email{\href{mailto:hendrik.suess@manchester.ac.uk}{hendrik.suess@manchester.ac.uk}}
\subjclass[2010]{57S25 (primary), 14L24, 53D20, 14J26 (secondary)}
\keywords{Grassmannian, torus action, orbit space, Geometric Invariant Theory, del Pezzo surface}
\begin{document}
\begin{abstract}
%We are complementing the work of Buchstaber and Terzi\'c by providing an alternative approach to determine the homology of the orbit space of a maximal compact torus action on the complex Grassmannian $\GC$. Our approach uses the well-known Geometric Invariant Theory of the maximal algebraic torus action on this Grassmannian.
  We determine the integral homology of the orbit space of a maximal compact torus action on the Grassmannian $\GC$. Our approach uses the well-known Geometric Invariant Theory of the maximal algebraic torus action on this Grassmannian.
\end{abstract}
\maketitle

\section{Introduction}
We consider the Grassmannian $\GC$, parametrising planes in $\CC^5$, together with the Pl\"ucker embedding $\GC \hookrightarrow \P(\bigwedge^2\CC^5)$. The action of the algebraic torus $(\CC^*)^5 \curvearrowright \CC^5$ by diagonal matrices induces an action of the subtorus
\[\TC = \{(t_1,\ldots,t_5) \in (\CC^*)^5 \mid t_1 \cdots t_5 =1 \}\]
 on $\GC$. Via the $\TC$-action on $\CC^5$ we also obtain a linearisation of this action $\TC \curvearrowright \bigwedge^2 \CC^5$. We refer to this linearisation as the \emph{standard} one. In this paper, we are mainly interested in the induced action of the associated compact torus
\[\TR = \{(s_1,\ldots,s_5) \in (\S^1)^5 \mid s_1 \cdots s_5 =1 \}.\]
The orbit space $\GC/\TR$ of this torus action is a compact Hausdorff space and the purpose of this paper is to determine the integral homology groups of this space and more generally to develop a framework which can be applied in similar situations.  The motivation for this problem comes from toric topology where the topology of torus quotients of Grassmannians, flag manifolds and Hessenberg varieties has been studied  recently \cite{zbMATH06783273,2018arXiv180305766B,cgrassmann,Ayzenberg2018,2019arXiv190502294C}, see also \cite{kt-top,suss2018orbit}. Our main result is the following.
\begin{theorem}
\label{thm:main}
  The orbit space $\GC/\TR$ has non-trivial homology groups
  \[
  H_i(\GC/\TR) \cong
  \begin{cases}
    \ZZ & i=0,8\\
    \ZZ/2\ZZ & i=5.
  \end{cases}
  \]
\end{theorem}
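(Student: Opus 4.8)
The plan is to study the $\TR$-invariant moment map $\mu\colon \GC \to \mathfrak{t}^*\cong\RR^4$ and the surjection $\bar\mu\colon \GC/\TR \to P$ it induces onto the moment polytope $P=\Delta(2,5)$, the hypersimplex spanned by the Pl\"ucker weights $e_i+e_j$. For $a$ in the interior of $P$ the fibre $\bar\mu^{-1}(a)=\mu^{-1}(a)/\TR$ is a symplectic reduction, which by Kempf--Ness coincides with a GIT quotient $\GC/\!/_\chi\TC$; since $5$ is odd the barycentric character has no strictly semistable points and yields the smooth quintic del Pezzo surface $\dP_5=\Bl_4\P^2$ (the classical identification $\GC/\!/\TC\cong\overline{M}_{0,5}$). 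First I would record the $\TC$-ample chamber decomposition of $P$ and identify the GIT quotient over each maximal chamber; by variation of GIT these quotients differ by wall-crossings, which for surfaces are blow-downs and blow-ups at a point, so every chamber quotient is a toric del Pezzo surface, and in particular all of them have torsion-free homology concentrated in even degrees.

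Next I would refine the face structure of $P$ by this chamber decomposition and analyse $\GC/\TR$ stratum by stratum. Over the relative interior of a face $F$ the orbit map restricts to a trivial bundle whose fibre is the reduction of the $\TR$-invariant subvariety of $\GC$ attached to $F$: the two families of facets contribute $\operatorname{Gr}(2,\CC^4)$ and $\P^3=\operatorname{Gr}(1,\CC^4)$, and over deeper faces one meets $\P^2$, $\P^1$, and points, all with even, torsion-free homology. The one exception is organisational rather than computational: for each of the five facets of type $\{x_k=0\}$ one has, as a whole, $\bar\mu^{-1}(\{x_k=0\})\cong\operatorname{Gr}(2,\CC^4)/T^3\cong S^5$. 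Thus $\GC/\TR$ is filtered by the preimages of the skeleta of the chamber complex, with the five embedded $5$-spheres $Y_k:=\bar\mu^{-1}(\{x_k=0\})$ as the only source of odd-dimensional homology.

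The homology of $\GC/\TR$ is then extracted from the spectral sequence of this filtration (equivalently, an iterated Mayer--Vietoris over the chamber complex), whose $E^1$-term is assembled from the known homology of the GIT quotients attached to the cells and whose differentials are the combinatorial restriction maps together with the wall-crossing maps from the first step. The even part collapses to $H_0=H_8=\ZZ$, the top class being the fundamental class of the $\dP_5$-fibration over the central $4$-dimensional chamber. In odd degrees only the classes $[Y_k]\in H_5$ occur, and the theorem amounts to the statement that the relations among $[Y_1],\dots,[Y_5]$ --- coming from how the $5$-spheres $Y_k$ meet along the deeper strata $\bar\mu^{-1}(\{x_k=x_l=0\})\cong\P^2/T^2$ (a disc) and from how a $Y_k$ bounds inside the interior $\dP_5$-fibration --- collapse $\ZZ^5$ precisely onto $\ZZ/2\ZZ$.

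I expect this last point to be the crux. Identifying the five classes $[Y_k]$ with one another through the facet intersections is combinatorial and should be routine, but producing the relation $2[Y_1]=0$ --- rather than a free quotient --- requires showing that a single $5$-sphere $Y_k$ bounds a chain only with multiplicity two, i.e.\ locating a degree-two attaching map in the structure of $\GC/\TR$ along the codimension-one and -two strata near $\partial P$ (morally, the wall-crossing that collapses the $\P^1$-fibre of $\operatorname{Gr}(2,\CC^4)/T^3$ folds a neighbourhood of $Y_k$ onto itself two-to-one). Establishing this, and checking that nothing contributes in degrees $1$--$4$, $6$, $7$, will demand a concrete local analysis of the orbit space rather than any formal vanishing argument.
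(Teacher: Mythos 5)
Your overall route coincides with the paper's: the moment map fibration $\tmu\colon \GC/\TR \to P=\Delta_{2,5}$, the identification of the GIT quotients over the chambers (with $Y=\dP_5$ over the central one), a filtration by preimages of the strata, and the five $5$-spheres over the octahedral facets as the carriers of odd homology. But the proposal stops exactly at the step that contains all the content, and the mechanism you conjecture for the torsion is not the one that operates. The relation $2[Y_k]=0$ does not come from a local degree-two fold of a neighbourhood of a single sphere $Y_k$, and splitting the problem into ``identify the five classes with each other (routine)'' plus ``find a separate relation $2[Y_1]=0$ (local analysis)'' is the wrong decomposition: both facts come from one and the same family of relations. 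Concretely, each of the ten exceptional curves $C_{ij}\subset Y$ survives exactly over the halfspace $P_{ij}=P\cap H_{ij}^+$, and $(P_{ij}\times C_{ij})$ descends to a $6$-disc $D^6_{ij}\subset \GC/\TR$ whose boundary $5$-sphere lies in $V_0$ and maps with degree $1$ onto $S^5_i$ and onto $S^5_j$ and with degree $0$ onto the other three spheres (since $C_{ij}$ is non-contracted precisely over $O_i^\circ$ and $O_j^\circ$). One then shows these ten relative classes generate $H_6(V_5,V_0)\cong\ZZ^5$ — this uses a K\"unneth computation of $H_*\bigl((P^+\times Y)/(\partial P^+\times Y)\bigr)$ together with a deformation retraction of every non-central chamber onto its ``positive boundary'', which is what makes your ``the spectral sequence collapses'' assertion precise and is not automatic. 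The image of the connecting map in $H_5(V_0)\cong\ZZ^5$ is therefore the sublattice spanned by the vectors $e_i+e_j$, i.e.\ the even-coordinate-sum sublattice of index $2$, and the cokernel $\ZZ/2\ZZ$ is this global combinatorial index of the ten $(-1)$-curves (equivalently, of the edges of $\Delta_{2,5}$). A ``concrete local analysis near $\partial P$'' of the kind you describe would not find it.

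Two further gaps. First, $\dP_5$ is not a toric surface (a blowup of $\P^2$ in four general points is not toric), though only its torsion-free even homology is used, so this is harmless. Second, your bookkeeping that the five $5$-spheres are ``the only source of odd-dimensional homology'' overlooks the class of $\partial P\cong \S^3$ itself: the point-fibres over $\partial P\setminus\bigcup_i O_i^\circ$ assemble, via a section $s\colon P\to\GC/\TR$ of $\tmu$, into a $3$-sphere whose class generates $H_3(V_0)\cong\ZZ$. This class must be shown to die in $\GC/\TR$; it does, because the section extends over the contractible $P$, and in the long exact sequence of $(V_5,V_0)$ it is hit isomorphically by $H_4(V_5,V_0)\cong\ZZ$ — which is also exactly the argument that forces $H_4(\GC/\TR)=0$. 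Without this step your claimed vanishing in degrees $3$ and $4$ is unsubstantiated.
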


\begin{remark}
  The homology groups with coefficients in $\ZZ/2\ZZ$ had been found before by
  Buchstaber and Terzi\'c in the earlier version \cite{cgrassmannp} of their article \cite{cgrassmann}. In between, they also determined the correct integral homology groups independently.
\end{remark}

The proof of Theorem~\ref{thm:main} will take us on a tour from the classical birational geometry of the del Pezzo surface of degree $5$ to the more recent theme of Variation of Geometric Invariant Theory which in turn, by the Kempf-Ness Theorem, links to the topology of the orbits space of the compact torus action. More precisely, our proof of  Theorem~\ref{thm:main} in Section~\ref{sec:homology-groups} will make use of the Geometric Invariant Theory (GIT) of the $\TC$-action on $\GC$. As it will be discussed in Section~\ref{sec:git} this Geometric Invariant Theory is known to be governed by the birational geometry of the degree-$5$ del Pezzo surface. For this reason we first review the birational geometry of this surface in Section~\ref{sec:dp5}. In this context the combinatorial structure of a \emph{directed} chamber decomposition arises naturally. Hence, we start off with introducing this notion in Section~\ref{sec:direct-chamb-decomp} and deriving some of its basic properties.

\subsection*{Acknowledgement}
This research was partially supported by the program 
\emph{Interdisciplinary Research} conducted jointly by the Skolkovo Institute of Science and Technology and the Interdisciplinary Scientific Center J.-V. Poncelet. In particular, I would like to thank Center Poncelet for its great hospitality.
This work was also partially supported by the grant 346300 for IMPAN from the Simons Foundation and the matching 2015-2019 Polish MNiSW fund.

I would like to thank Anton Ayzenberg, Victor Buchstaber, Alexander Perepechko, Nige Ray and Svjetlana Terzi\'c for stimulating discussions on the subject of this paper. Moreover, I am grateful for the helpful comments by an anonymous referee.

\section{Directed chamber decompositions}
\label{sec:direct-chamb-decomp}
Consider a polyhedron $P$ contained in a real vector space $V$. By a \emph{chamber decomposition} of $P$ we understand a polyhedral subdivision induced by a finite set of affine hyperplanes, i.e. the maximal elements (called chambers) of the polyhedral subdivision are given as the closures of connected components of the complement of the hyperplane arrangement.

Now, we equip each of the hyperplanes with a direction, i.e. we consider a finite collection of closed affine halfspaces $\{H^+_\alpha\}_{\alpha \in \CA}$ with $\CA$ being some finite index set and 
\[
  H_\alpha^+ = \{v \in V \mid \langle v, u_\alpha\rangle \geq b_\alpha\} \subset V
\]
for some $u_\alpha \in V^*$ and $b_\alpha \in \RR$. Then $H^-_\alpha = \{v \in V \mid \langle v, u_\alpha\rangle \leq b_\alpha\}$ will denote the opposite closed affine halfspace and $H_\alpha = H^+_\alpha \cap H^-_\alpha$ the separating hyperplane.  Consider a subset $J \subset \CA$ and denote its complement $\CA\setminus J$ by $\overline{J}$. Assume that the corresponding intersection \[P_J = P \cap \left(\bigcap_{\alpha \in J} H_\alpha^+\right) \cap \left(\bigcap_{\alpha \in \overline{J}} H_\alpha^- \right)\]  of positive and negative halfspaces gives a polyhedron of full dimension. These polyhedra are simply the chambers of the chamber decomposition corresponding to the arrangement of hyperplanes $\{H_\alpha\}_{\alpha \in \mathcal{A}}$. Moreover, the boundary of every chamber comes with a distinguished part 
\(\partialp P_J := P_J \cap \left(\partial P \cup \bigcup_{\alpha \in J} H_\alpha\right),\)
which we will call the \emph{positive boundary} of $P_J$. In this situation, we speak of a \emph{directed chamber decomposition} consisting of the \emph{directed chambers} $(P_J,\partialp P_J)$.

If the common intersection of all positive halfspaces
\[P^+:=P_\CA=P \cap \bigcap_{\alpha \in \CA} H_\alpha^+\] has non-empty interior we call it the \emph{central} chamber of the directed chamber decomposition. Note, that by definition we have $\partial P^+ = \partial^+ P^+$.

\begin{lemma}
  \label{lem:pos-boundary-intersect}
  Consider two chambers $P_I, P_J$ with $J \not\subset I$ then $P_I \cap P_J$
  is contained in $\partialp P_J$.
\end{lemma}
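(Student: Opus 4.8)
The plan is to unwind the definitions and use the hypothesis $J\not\subset I$ to produce a single ``witnessing'' index. Since $J\not\subset I$ there is some $\alpha\in J\setminus I$; equivalently $\alpha\in J$ and $\alpha\in\overline I$. This is the only place the hypothesis enters, and everything else is a sign computation.

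Next I would take an arbitrary point $v\in P_I\cap P_J$ and track the two inequalities attached to the chosen $\alpha$. Because $v\in P_J$ and $\alpha\in J$, the definition of $P_J$ forces $v\in H_\alpha^+$, i.e. $\langle v,u_\alpha\rangle\ge b_\alpha$. Because $v\in P_I$ and $\alpha\in\overline I$, the definition of $P_I$ forces $v\in H_\alpha^-$, i.e. $\langle v,u_\alpha\rangle\le b_\alpha$. Combining the two gives $\langle v,u_\alpha\rangle=b_\alpha$, that is, $v\in H_\alpha$.

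Finally I would assemble the conclusion: since $\alpha\in J$ we have $H_\alpha\subset\bigcup_{\beta\in J}H_\beta\subset \partial P\cup\bigcup_{\beta\in J}H_\beta$, and of course $v\in P_J$; hence $v\in P_J\cap\bigl(\partial P\cup\bigcup_{\beta\in J}H_\beta\bigr)=\partialp P_J$ by definition of the positive boundary. As $v$ was arbitrary, $P_I\cap P_J\subset\partialp P_J$.

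There is no real obstacle here; the statement is essentially immediate once the definitions are laid out. The only point requiring a little care is the bookkeeping of which halfspace ($H_\alpha^+$ versus $H_\alpha^-$) each chamber imposes on the index $\alpha$ — it is precisely the asymmetry between $J$ and $\overline I$ at $\alpha$ that collapses $v$ onto the separating hyperplane $H_\alpha$, and it is essential that $\alpha$ lies in $J$ (not merely in $\overline I$) so that $H_\alpha$ is one of the hyperplanes allowed in $\partialp P_J$.
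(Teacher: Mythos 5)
Your proof is correct and follows exactly the same route as the paper's: pick $\alpha\in J\setminus I$, observe that $P_J$ forces membership in $H_\alpha^+$ while $P_I$ forces membership in $H_\alpha^-$, conclude that the intersection lies on $H_\alpha$, and note that $\alpha\in J$ makes $H_\alpha$ one of the hyperplanes entering $\partialp P_J$. Your write-up is just a more explicit version of the same two-line argument.
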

\begin{proof}
  By our precondition $J \setminus I$ is not empty. Consider an $\alpha \in J\setminus I$. Then $P_I \cap P_J$ must be contained in both $H^+_\alpha$ and $H^-_\alpha$. Hence, it is a subset of $H_\alpha$. By definition this implies that $P_I \cap P_J$ is a subset of $\partialp P_J$. 
\end{proof}

\begin{lemma}
  \label{lem:pos-boundary-retract}
  Consider a directed chamber decomposition admitting a central chamber. Then
 for every other compact chamber $P_J$ the positive boundary $\partialp P_J$ is a deformation retract of $P_J$.
\end{lemma}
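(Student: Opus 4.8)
The plan is to fix a point $x_0$ in the interior of the central chamber $P^+$ and to deformation retract $P_J$ onto $\partialp P_J$ by pushing every point of $P_J$ along the ray that emanates from it in the direction \emph{away} from $x_0$, until it leaves $P_J$.

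Two elementary facts about $x_0$ will be used throughout: since $x_0 \in \operatorname{int} P^+$ one has $\langle x_0, u_\alpha\rangle > b_\alpha$ for every $\alpha \in \CA$ and $x_0 \in \operatorname{int} P$; and since $P_J$ is not the central chamber there is some $\alpha_0 \in \CA \setminus J$, whence $x_0 \notin H^-_{\alpha_0} \supseteq P_J$, so in particular $x_0 \notin P_J$. For $v \in P_J$ (hence $v \neq x_0$) set $v_t := v + t(v - x_0)$ for $t \geq 0$. The computation that drives everything is that for $\alpha \in \CA \setminus J$ the affine function $t \mapsto \langle v_t, u_\alpha\rangle$ is \emph{strictly decreasing}, since its slope $\langle v - x_0, u_\alpha\rangle \leq b_\alpha - \langle x_0, u_\alpha\rangle$ is negative. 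As $P_J$ is compact and the ray is unbounded, $\tau(v) := \max\{t \geq 0 \mid v_t \in P_J\}$ is a well-defined non-negative real number; by convexity of $P_J$ the admissible set of parameters is exactly $[0,\tau(v)]$, and $v_{\tau(v)} \in \partial P_J$.

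I would next show $\tau(v) = 0 \iff v \in \partialp P_J$, and that $v_{\tau(v)} \in \partialp P_J$ for \emph{every} $v \in P_J$. If $v \in \partialp P_J$ then either $v \in \partial P$, where a supporting hyperplane of $P$ at $v$ (with $x_0$ strictly on its interior side) pushes $v_t$ out of $P$ for all $t > 0$, or $v \in H_\alpha$ for some $\alpha \in J$, where $\langle v_t, u_\alpha \rangle < b_\alpha$ for $t > 0$; either way $\tau(v) = 0$. Conversely, if $v \in P_J \setminus \partialp P_J$, then $v \in \operatorname{int} P$ and $v \notin H_\alpha$ for all $\alpha \in J$, so the only defining inequalities of $P_J$ that can be active at $v$ are $\langle \cdot, u_\alpha \rangle \leq b_\alpha$ with $\alpha \in \CA \setminus J$; by the monotonicity above these become and stay strict along the ray, while the others remain strict for small $t$, so in fact $v_t \in \operatorname{int} P_J$ for all sufficiently small $t > 0$, and in particular $\tau(v) > 0$. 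Finally, when $\tau(v) > 0$ the monotonicity gives $\langle v_{\tau(v)}, u_\alpha\rangle < \langle v, u_\alpha \rangle \leq b_\alpha$ for every $\alpha \in \CA \setminus J$, so $v_{\tau(v)}$ lies on none of the hyperplanes $H_\alpha$, $\alpha \in \CA \setminus J$; since $\partial P_J$ is contained in $\partialp P_J \cup \bigcup_{\alpha \in \CA \setminus J} H_\alpha$, the exit point $v_{\tau(v)}$ lies in $\partialp P_J$.

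It remains to prove that $\tau \colon P_J \to \RR_{\geq 0}$ is continuous; granting this, $r(v) := v_{\tau(v)} = v + \tau(v)(v - x_0)$ is a continuous map $P_J \to \partialp P_J$ which restricts to the identity on $\partialp P_J$ (there $\tau \equiv 0$), and $H(v,s) := v + s\,\tau(v)(v - x_0)$ is a strong deformation retraction of $P_J$ onto $\partialp P_J$: it takes values in $P_J$ because $s\,\tau(v) \in [0, \tau(v)]$, and it fixes $\partialp P_J$ at every time. Upper semicontinuity of $\tau$ is immediate from closedness of $P_J$: if $v_n \to v$ then the numbers $\tau(v_n)$ are bounded (as $P_J$ is bounded and $v_n \to v \neq x_0$), and passing to a subsequence and then to the limit in $v_n + \tau(v_n)(v_n - x_0) \in P_J$ gives $\limsup_n \tau(v_n) \leq \tau(v)$. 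Lower semicontinuity is the step I expect to be the main obstacle, and the only place convexity of $P_J$ is used in an essential way: for $0 < T < \tau(v)$ one must check $v_T \in \operatorname{int} P_J$, which follows because the segment from an interior point $v_\varepsilon$ (with $\varepsilon > 0$ small, interior by the active-constraint discussion above) to the point $v_{\tau(v)} \in P_J$ meets $\operatorname{int} P_J$ in everything but possibly its far endpoint, so $v_T \in \operatorname{int} P_J$ after letting $\varepsilon \downarrow 0$; then $v_n + T(v_n - x_0) \to v_T \in \operatorname{int} P_J$ forces $v_n + T(v_n - x_0) \in P_J$ for $n \gg 0$, i.e. $\tau(v_n) \geq T$, hence $\liminf_n \tau(v_n) \geq \tau(v)$. (At points of $\partialp P_J$ lower semicontinuity is trivial, and continuity follows from $\tau \geq 0$ together with upper semicontinuity.) Assembling these pieces proves the lemma; note that compactness of $P_J$ enters both in guaranteeing $\tau(v) < \infty$ and, via the exit point lying in $\partialp P_J$, in showing that $\partialp P_J$ is non-empty in the first place.
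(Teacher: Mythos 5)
Your proof is correct and follows essentially the same route as the paper's: a radial deformation retraction of $P_J$ onto $\partialp P_J$ along rays emanating from an interior point of the central chamber (the paper normalises so that this point is the origin and works with the line $\RR v$, whereas you keep a general basepoint $x_0$), with the same case analysis showing the exit point lands in $\partialp P_J$ and that points of $\partialp P_J$ do not move. The only genuine addition is your careful verification that the exit time $\tau$ is continuous, a point the paper's proof takes for granted.
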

\begin{proof}
Note, that $P$ is itself an intersection of halfspaces. By adding them to the set of positive halfspaces we may assume, that $P=V$, since the chambers of the original decomposition of $P$ are now a subset of the chambers of the induced decomposition of $V$. Moreover, without loss of generality we may assume that $0$ is an interior point of the intersection $\bigcap_{\alpha \in \CA} H_\alpha^+$, since by our precondition an interior point exists and we may apply a global translation moving this point into the origin. In the notation above this means that all the $b_\alpha$ are negative.

We now consider  $v \in P_J$. The intersection of $P_J$ with the line $\RR v$ has to be compact and convex. Hence, we obtain a line segment $L(v)$. Then there is a unique intersection point $r(v):=L(v) \cap \partial^+P_J$. Indeed, we may assume $L(v)=\overline{uw}$, $|u| \geq |w|$. Then $u \in \partial P_J^+$. Indeed, it is clearly a boundary point. Let $H_{\alpha_1},\ldots, H_{\alpha_k}$ be its supporting hyperplanes
i.e. $\langle  u_{\alpha_j} , u \rangle =b_{\alpha_j} < 0$. Then we have
\[\langle  u_{\alpha_j} ,t u \rangle =t\langle u_{\alpha_j} , u \rangle < b_{\alpha_j}\]
for $t>1$. Hence, $t u \in \bigcap_{j=1}^k H^-_{\alpha_j}$. This implies that 
$\alpha_j \in J$ for at least one $j \in \{1,\ldots,k\}$. If not, we would have $t u \in P_J$ for $t$ sufficiently close to $1$. A contradiction to our choice of $u$. 
On the other hand, we have $u \in \bigcap_{\alpha \in J} H^+_\alpha$, i.e. $\langle  u_\alpha , u \rangle \geq b_\alpha < 0$ for all $\alpha \in J$, but this implies \(\langle u_\alpha , s  u \rangle > b_\alpha\) for $s < 1$ and consequently $s u \not\in \bigcup_{\alpha \in J} H_\alpha$. Hence, $L(v) \cap \partialp P_J = u$. Now $H(t,v)= (1-t)\cdot v + t\cdot r(v)$ defines a deformation retraction of $P_J$ to $\partialp P_J$.
\end{proof}

\section{The birational geometry of the del Pezzo surface of degree $5$}
\label{sec:dp5}
In this section we review basic facts on the birational geometry of the del Pezzo surface $Y=\dP_5$ of degree $5$, most of them can be found in \cite[Chapter 8]{zbMATH06053083}.

The del Pezzo surface $Y$ can be constructed as the blowup of $\PP^2$ in $4$ points in general position. Hence, the Picard group $\Pic(Y)$ is a free abelian group of rank $5$ with basis $[E_1], \ldots, [E_5]$, where $E_5$ is the preimage of a line in $\PP^2$ not containing the points of the blowup and $E_1,\ldots,E_4$ are the exceptional divisors of the blowups. The class of the canonical divisor is given by $[K_Y] = -3[E_5] + \sum_{i=1}^4 [E_i]$. The intersection product of curves defines a non-degenerate bilinear form on $\Pic(Y)$, where
\begin{equation*}
  [E_i]\cdot [E_j] =
  \begin{cases}
    1 & i=j=5\\
    -1 & i=j \leq 4\\
    0 & i\neq j.
  \end{cases}
\end{equation*}
The cone of effective divisors $\Eff(Y) \subset N^1 := \Pic(Y) \otimes \RR$ is spanned by the classes of ten irreducible rational curves (i.e. they are isomorphic to $\PP^1$), which we are going to be indexed by two-element subsets $\{i,j\} \subset \{1,\ldots,5\}$. Their classes can be expressed in the basis from above as follows. 
\begin{align*}
  [C_{i5}] & = [E_i], \; i \in \{1,\ldots,4\} \\
  [C_{ij}] & = [E_5]-[E_i]-[E_j], \; \{i,j\} \subset \{1,\ldots,4\}
\end{align*}
These curves are characterised by the properties $[C_{ij}]^2=-1$ and $[K_Y] \cdot [C_{ij}]=-1$. They are usually referred to as $(-1)$-curves or \emph{exceptional} curves. Note, that by the above two such curves $C_{ij}$ and $C_{k\ell}$ intersect if and only if $\{i,j\} \cap \{k,\ell\} = \emptyset$. Hence, the intersection graph of these ten curves is the well-known \emph{Petersen graph} shown in Figure~\ref{fig:petersen}.
\begin{figure}[h]
  \centering
\begin{tikzpicture}[rotate=90]
  \GraphInit[vstyle=classic]
  \SetVertexNoLabel
  \SetUpVertex[MinSize=1pt]
  \grPetersen[RA=1,RB=0.5]
\end{tikzpicture}
  \caption{Intersection graph for the ten exceptional curves}
  \label{fig:petersen}
\end{figure}
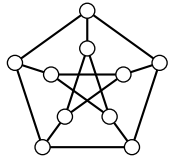

We also consider the polytope $P$ obtained as the cross section
\[P= \Eff(Y) \cap \{v \mid [K_Y] \cdot v = -1\} \subset N^1.\]
Since $\Eff(Y)$ is spanned by the classes  $[C_{ij}]$ which fulfil $[C_{ij}]\cdot [K_Y] = -1$, it follows that $P$ coincides with the convex hull of these classes in $N^1$ and the faces of $\Eff(Y)$ with the exception of the vertex are in one-to-one correspondence with the faces of $P$. Note, that expressing the classes $[C_{ij}]$ in a new basis $b_1,\ldots,b_5$ of $N^1$ given by
$b_i = \frac{1}{2}[E_5] - [E_i]$, we obtain $[C_{ij}]=b_i+b_j$. Hence,
$P$ coincides with the hypersimplex $\Delta_{2,5}$. Furthermore, one observes that the facets of $P$ come in pairs $O_i$ and $T_i$, $i=1,\ldots,5$, where $T_i$ is the tetrahedron obtained as the convex hull of the classes $[C_{ij}]$ with $j \in \{1,\ldots,5\}\setminus \{i\}$ and $O_i$ is the octahedron obtained as the convex hull of the remaining $6$ exceptional classes. Recalling the fact that two exceptional curves intersect if and only if their index sets are disjoint we see that the tetrahedral facets are given exactly by all possible choices of $4$ pairwise disjoint curves among the $10$ exceptional ones.

Now, for every rational divisor class $[D] \in \Eff(Y)$ one obtains a projective variety
\[Y_{[D]} = \Proj \left(\bigoplus_{k \geq 0} H^0(Y,\CO(kD))\right).\]
Here, we have $Y_{[kD]} \cong Y_{[D]}$ for every $k \geq 0$. Hence, we may assume that $[D] \in P$. The construction comes with a contraction map $\phi_{[D]} \colon Y \dashrightarrow Y_{[D]}$. In the case of a surface this rational map extends to a morphism. If we choose $[D]$ from the interior of $P$ the corresponding map $\phi_{[D]}$ is birational inducing an isomorphism between the complement of the exceptional curves and its image.  Moreover, an exceptional curve $C_{ij}$ is contracted to a point under this map if and only if we have $[D] \cdot [C_{ij}] \leq 0$ for the intersection product.  Hence, for every exceptional curve $C_{ij}$ we may define two closed halfspaces and its boundary hyperplane 
 \[
 H_{{ij}}^\pm := \{[D] \mid \pm [D]\cdot [C_{ij}] \geq 0\},\quad H_{{ij}} := \{[D] \mid [D]\cdot [C_{ij}] = 0\}
 \]
 In this way we obtain a directed chamber decomposition of $P$, where the index set $\CA$ for the hyperplanes consists of all exceptional curves (or equivalently of all $2$-element subsets of $\{1,\ldots,5\}$).
 \begin{remark}
   \label{rem:positive-boundary1}
   For $[D] \in P_J\setminus \partialp P_J$ the map
$\phi_{[D]}$ is given as the contraction of the curves in $\overline{J} = \CA\setminus J$.
 \end{remark}
 For some choices of $J \subset \CA$ the intersection $P_J$ has lower dimension, i.e. $P_J$ does not constitute a chamber. The chambers among the $P_J$ correspond exactly to the choice $J \subset \CA$ such that $\overline{J}$ consists of pairwise disjoint exceptional curves. It is not hard to see from the intersection graph that one can choose at most $4$ such curves. The corresponding birational models $Y_{[D]}$ are obtained from $Y$ by blowing down up to $4$ curves. In this way we obtian the blowups of $\PP^2$ in up to $4$ points and additionally $\PP^1 \times \PP^1$.  Note, that there is a central chamber $P^+ \subset P$ spanning the nef cone $\Nef(Y) \subset N^1$. Now, consider a point $[D] \in \partial P$. Then $\phi_{[D]} \colon Y \to \PP^1$ defines a conic bundle structure on $Y$ with three reducible fibres consisting of two intersecting exceptional curves each if $[D]$ lies in the interior of one of the facets $O_k$. Here, these six exceptional curves are exactly the ones corresponding to the vertices of $O_k$. All other exceptional curves are mapped isomorphically onto $\PP^1$.  Finally, if $[D]$ does not lie in the interior of one of the octahedral facets, then $\phi_{[D]} \colon Y \to *$ is the contraction to a point.
 
 \section{The Geometric Invariant Theory of $\GC$}
 \label{sec:git}
 In this section we use standard results from the variation of GIT quotients. We refer to \cite{zbMATH04029746, zbMATH00108945,zbMATH01560390} for details.  

The standard linearisation of the torus action on $\GC$ defines a moment map on $\GC$ as follows.
  \begin{align*}
    \mu \colon \GC &\to \M := \RR^5/\RR\cdot (1,1,1,1,1) \cong \RR^4 ;\\
    (z_{ij}) &\mapsto \frac{\sum_{ij} |z_{ij}|^2 (e_i + e_j)}{\sum_{ij}|z_{ij}|^2} + \RR\cdot (1,1,1,1,1).
  \end{align*}
It is easy to see that this map is invariant under the compact torus action. Hence, we obtain an induced map $\tmu \colon \GC/\TR \to \M$ and a fibre of this map can be naturally identified 
with the quotient $\mu^{-1}(u)/\TR$. On the other hand, the fibre quotients $\mu^{-1}(u)/\TR$ are known to be homeomorphic to quotients
of $\GC$ by the $\TC$-action in the sense of GIT. In particular, these fibres are homeomorphic to algebraic varieties. The fibre $\tmu^{-1}(0)$ equals the GIT quotient corresponding to the standard linearisation.

The GIT quotients $Y_u := \tmu^{-1}(u) = \mu^{-1}(u)/\TR$ are known to vary in a rather well-behaved manner. Namely, there is a polyhedral subdivision $\Lambda$ of the moment polytope $P$, such that 
$\tmu^{-1}(u) \cong \tmu^{-1}(u')$ whenever there is an element $\lambda \in \Lambda$ such that $u$ and $u'$ belong to the relative interior $\lambda^\circ$ of $\lambda$, see e.g. \cite[§3]{zbMATH04029746}.  Hence, we may define $Y_{\lambda} := Y_{u}$ for any $u$ in the relative interior of $\lambda$. The map $\tmu$ induces a trivial fibration over the relative interior of each $\lambda \in \Lambda$, i.e. $\tmu^{-1}(\lambda^\circ) \cong \lambda^\circ \times Y_\lambda$. Moreover, for every pair of faces 
$\lambda' \prec \lambda$ there is an induced contraction morphism $f_{\lambda\lambda'} \colon Y_\lambda \to Y_{\lambda'}$. Hence, we obtain an inverse system of GIT quotients indexed by the elements of the polyhedral subdivision $\Lambda$.

The GIT quotient for the standard linearisation of the $\TC$-action on $\GC$ is known to be $Y=\dP_5$, the del Pezzo surfaces of degree $5$. This is shown in \cite[Prop.~3.2]{zbMATH00553990}. The proof there also shows that the complement of the semi-stable locus has codimension $2$. Moreover, the stable and semi-stable locus coincide by Corollary 2.5 loc.cit. Now, by Theorem 2.3, Remark 2.3.1 and Corollary~2.4 of \cite{zbMATH01700876} we conclude, that birational geometry of the del Pezzo surface $Y$ and the Geometric Invariant Theory of the Grassmannian $\GC$ coincide in the following sense.

There is an isomorphism of vector spaces
  \(\Psi \colon  \M \times \RR \to N^1(Y)\)
  identifying the cone over $P\times \{1\}$ with $\Eff(Y)$, such that
\begin{enumerate}
\item  the polyhedral subdivision $\Lambda$ gets identified with the chamber decomposition from Section~\ref{sec:dp5}.
\item
  for a fixed $\lambda \in \Lambda$ and any two rational elements $u,u' \in \lambda^\circ$ in its relative interior the corresponding contractions $\phi_{\Psi(u,1)} \colon Y \to Y_{\Psi(u,1)}$ and $\phi_{\Psi(u',1)} \colon Y \to Y_{\Psi(u',1)}$ coincide,
\item 
  for $\lambda \in \Lambda$ and $u \in \lambda^\circ$ the GIT quotient $Y_\lambda$ coincides with the birational model $Y_{\Psi(u,1)}$. Hence, we obtain well-defined morphisms  $\phi_\lambda \colon Y \to Y_\lambda$ by setting $\phi_\lambda = \phi_{\Psi(u,1)}$. 
\item Moreover, the contraction morphisms $\phi_\lambda$ are compatible with the inverse system of GIT quotients, i.e. for $\lambda' \prec \lambda$ we have $\phi_{\lambda'}=f_{\lambda\lambda'} \circ \phi_\lambda.$ \label{item:inv-system-compatible}
\end{enumerate}

Goresky and MacPherson showed in \cite[§5]{zbMATH04029746} that the orbit space
$X/\TR$ of a projective variety $X \subset \PP^N$ with moment polytope $P$ can be recovered from the corresponding inverse system of GIT quotient as the identification space
 \[X/\TR \cong \left(\bigsqcup_{\lambda \in \Lambda} \lambda \times Y_\lambda\right)/{\sim},\]
 where $(u,y) \sim (u,y')$ if $(u,y') \in \lambda' \times Y_{\lambda'}$,
 $(u,y) \in \lambda \times Y_\lambda$ with $\lambda' \prec \lambda$ 
 and $f_{\lambda\lambda'}(y)=y'$.

 In our setting, where $Y$ dominates all GIT quotients of $\GC$ via the morphisms $\phi_\lambda \colon Y \to Y_\lambda$, this construction can be simplified as follows. Let us define an equivalence relation $\rel$ on $P \times Y$ by
\[(u,y) \rel (u,y')  \Leftrightarrow \phi_\lambda(y) = \phi_\lambda(y')\]
for the unique $\lambda \in \Lambda$ with $u \in \lambda^\circ$. Then by \cite[Corollary 2.2]{suss2018orbit} we have
\begin{equation}
  \label{eq:ident-space}
  \GC/\TR \cong (P \times Y)/{\rel}.
\end{equation}
For the remaining part of this paper we will utilise this homeomorphism to study the topology of $\GC/\TR$.

From now one we will identify $P$ with its image $\Psi(P\times \{1\})$ and the polyhedral subdivision $\Lambda$ with the one induced by the directed chamber decomposition of Section~\ref{sec:dp5}. In the following Proposition we merely reformulate the observations from Section~\ref{sec:dp5} in terms of GIT and the map $\tmu \colon \GC/\TR \to P$.

\begin{proposition}
  \label{prop:GIT-chambers}
  For an element $\lambda \in \Lambda$ of maximal dimension we have a homeomorphism
  \(\tmu^{-1}(\lambda \setminus \partialp \lambda) \cong (\lambda \setminus \partialp \lambda) \times Y_\lambda,\)
  where $Y_\lambda$ is either $\PP^1 \times \PP^1$ or a blowup of $\PP^2$ in up to $4$ points.

  Morover, for the facets $O_i \prec P$ we have $\tmu^{-1}(O_i^\circ) \cong O_i^\circ \times \PP^1$. For the second type of facets $T_i \prec P$ we have $\tmu^{-1}(T_i) \cong T_i$.
\end{proposition}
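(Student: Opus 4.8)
The plan is to read off all three statements from the homeomorphism $\GC/\TR\cong(P\times Y)/{\rel}$ of \eqref{eq:ident-space}, under which $\tmu$ becomes the projection onto the first factor. For any subset $S\subseteq P$ this gives $\tmu^{-1}(S)\cong(S\times Y)/{\rel}$, and since $\rel$ identifies only points lying in a common fibre $\{u\}\times Y$, everything reduces to understanding the relation that $\rel$ induces on the fibres over $S=\lambda\setminus\partialp\lambda$, over $S=O_i^\circ$, and over $S=T_i$, and---this is the crucial point---checking that this fibrewise relation is the same for every $u\in S$.

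For a chamber $\lambda=P_J$ and any $u\in\lambda\setminus\partialp\lambda=P_J\setminus\partialp P_J$, Remark~\ref{rem:positive-boundary1} identifies the contraction $\phi_{\Psi(u,1)}$ governing $\rel$ on the fibre $\{u\}\times Y$ with $\phi_\lambda$, the blow-down of the pairwise disjoint $(-1)$-curves indexed by $\overline J$; in particular it does not depend on $u$. Hence $\tmu^{-1}(\lambda\setminus\partialp\lambda)\cong((\lambda\setminus\partialp\lambda)\times Y)/{\rel}$, where $\rel$ collapses each fibre along $\phi_\lambda$. Now $\phi_\lambda\colon Y\to Y_\lambda$ is a surjective morphism of projective varieties, hence a closed continuous surjection and therefore a topological quotient map whose fibres are exactly the classes of this fibrewise relation; and since $\partialp\lambda$ is closed in the compact polytope $\lambda$, the set $\lambda\setminus\partialp\lambda$ is locally compact Hausdorff, so $\id\times\phi_\lambda$ is again a quotient map. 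This exhibits $\tmu^{-1}(\lambda\setminus\partialp\lambda)$ as the product $(\lambda\setminus\partialp\lambda)\times Y_\lambda$, and by the classification of chambers recalled in Section~\ref{sec:dp5}, $Y_\lambda$ is $\PP^1\times\PP^1$ or a blow-up of $\PP^2$ in at most four points.

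The two types of facets are handled by the same mechanism, now feeding in the description of $\phi_{[D]}$ on $\partial P$ from Section~\ref{sec:dp5}. For every $u\in O_i^\circ$ the map $\phi_u\colon Y\to\PP^1$ is the conic bundle whose three reducible fibres are the pairs of intersecting exceptional curves sitting at the vertices of $O_i$; this datum, and hence the morphism, is independent of $u$, so $\rel$ is fibrewise constant on $O_i^\circ\times Y$ with fibre quotient $\PP^1$, and the same product argument as above gives $\tmu^{-1}(O_i^\circ)\cong O_i^\circ\times\PP^1$. For $u\in T_i$---including the points of the relative boundary of $T_i$, none of which lies in the interior of an octahedral facet---one has $\phi_u\colon Y\to\pt$, so $\rel$ collapses the whole fibre $\{u\}\times Y$, and $\tmu^{-1}(T_i)\cong T_i\times\pt=T_i$.

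I expect only two points to require care. First, that $\lambda\setminus\partialp\lambda$---which is a union of several relatively open cells of $\Lambda$ rather than a single cell---nevertheless carries a constant fibrewise relation; this is precisely where the definition of the positive boundary enters, via Remark~\ref{rem:positive-boundary1}. Second, the point-set fact that the quotient of a product $Z\times Y$ by a fibrewise relation coincides with $Z\times(Y/{\sim})$ provided the factor $Z$ is locally compact Hausdorff; this is what promotes the tautological bijections above to homeomorphisms. For the tetrahedral facets one also has to observe that nothing changes on passing from $T_i^\circ$ to the closed facet $T_i$, because no face of $T_i$ meets the interior of an octahedral facet. Everything else is a direct transcription of Sections~\ref{sec:dp5} and~\ref{sec:git}.
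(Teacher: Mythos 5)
Your argument is correct and follows exactly the route the paper intends: the paper offers no explicit proof, stating only that the proposition ``merely reformulates the observations from Section~\ref{sec:dp5}'' via the identification \eqref{eq:ident-space}, and your write-up supplies precisely those details. In particular you correctly isolate the two points that actually need checking --- that the fibrewise relation induced by $\rel$ is constant over $\lambda\setminus\partialp\lambda$ (and over $O_i^\circ$, which is itself subdivided by $\Lambda$ but carries a single conic-bundle morphism) and that a fibrewise quotient of $S\times Y$ over a locally compact Hausdorff base is again a product --- so nothing further is required.
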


\section{The homology groups for $\GC/\TR$}
\label{sec:homology-groups}
We consider the chamber decomposition $\Lambda$ of the hypersimplex $P=\Delta_{2,5}$ from Section~\ref{sec:dp5}, which parametrises the contractions $Y_{[D]}$ of $Y=\dP_5$. In Section~\ref{sec:git} we have seen that that these contractions coincide with the fibres $Y_u = \tmu^{-1}(u)$.
For $i=1,\ldots,5$ let $\Lambda_k$ the set of chambers $\lambda \in \Lambda$ such
that for $u \in \lambda^\circ$ the morphism $Y \to Y_u$ contracts exactly $5-k$ of the $10$ exceptional curves. In the notation of Section~\ref{sec:dp5} this
is 
\(\Lambda_k = \{P_J \mid \# \overline{J} = 5-k\}.\)
Let $|\Lambda_k|$ denote the support of $\Lambda_k$, i.e. $|\Lambda_k| := \bigcup_{\lambda \in \Lambda_k} \lambda$. By Lemma~\ref{lem:pos-boundary-intersect} for a chamber $\lambda \in \Lambda_{k+1}$ we have $\lambda \cap \bigcup_{i=1}^k|\Lambda_{i}| = \partialp\lambda$. Now, for $k=1, \ldots, 5$ we set 
\[V_k=\tmu^{-1}\left(\bigcup_{i=0}^k |\Lambda_i|\right)\]
and $V_0 = \tmu^{-1}(\partial P)$. In this way we obtain a filtration of $\GC/\TR$ by closed subspaces. Our strategy to determine the homology groups of $V_5=\GC/\TR$ is as follows.
\begin{enumerate}
\item In Proposition~\ref{prop:relative-hom-V5/V4-V5/V0} we show that the relative homology groups of the pairs $(V_5,V_{4})$ and $(V_5,V_{0})$ coincide.
\item In Lemma~\ref{lem:relative-hom-V5/V4} we calculate the relative homology groups of the pair $(V_5,V_4)$.
\item In Lemma~\ref{lem:V0} we calculate the homology groups of $V_0$.
\item We consider the long exact sequence of homology for the pair $(V_5,V_0)$. The results from steps (1)-(3) allow us to calculate most of the homology groups of $V_5$.
\item To determine the remaining homology groups we have to study some of the homomorphisms in the long exact sequence in detail. This is done in Lemma~\ref{lem:inclusion} and Proposition~\ref{prop:boundary-map}.
\end{enumerate}

\goodbreak
\begin{lemma}
  \label{lem:contractible}
  For every full dimensional $\lambda \in \Lambda$ the quotient space $\tilde\mu^{-1}(\lambda)/\tilde\mu^{-1}(\partialp \lambda)$ is contractible. 
\end{lemma}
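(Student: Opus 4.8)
The plan is to use the homeomorphism $\GC/\TR \cong (P \times Y)/{\rel}$ from \eqref{eq:ident-space} to get an explicit description of $\tmu^{-1}(\lambda)$ and $\tmu^{-1}(\partialp \lambda)$ as quotients of $\lambda \times Y$, and then exhibit an explicit contraction. First I would observe that, since the equivalence relation $\rel$ only identifies points lying over the \emph{same} $u \in P$, the preimage $\tmu^{-1}(\lambda)$ is the quotient $(\lambda \times Y)/{\rel}$ where over the interior $\lambda \setminus \partialp \lambda = \lambda^\circ \cup (\lambda \cap \partial P \cdot \text{stuff})$—more precisely over $\lambda^\circ$ the relation collapses $Y$ to $Y_\lambda$ by $\phi_\lambda$, and over a boundary face $\lambda' \prec \lambda$ it collapses $Y$ to $Y_{\lambda'}$ by $\phi_{\lambda'}$, consistently with the inverse system by item~\eqref{item:inv-system-compatible}. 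Thus $\tmu^{-1}(\lambda) \cong (\lambda \times Y)/{\rel}$ and $\tmu^{-1}(\partialp \lambda)$ is the image of $\partialp \lambda \times Y$ under the quotient map.

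The key geometric input is Lemma~\ref{lem:pos-boundary-retract}: for a compact chamber $\lambda = P_J$ in a directed chamber decomposition with a central chamber, $\partialp \lambda$ is a deformation retract of $\lambda$, via the explicit homotopy $H(t,v) = (1-t)v + t\, r(v)$ (after translating so the central chamber contains the origin). The plan is to lift this homotopy to $\lambda \times Y$ by the formula $\hat H(t,(v,y)) = (H(t,v), y)$ and check that it descends to the quotient $\tmu^{-1}(\lambda)$. Descending requires that $\hat H$ respects $\rel$, which is automatic fibrewise in $Y$ since $\hat H$ does not move the $Y$-coordinate; one just needs that as $v$ moves along the retraction path, the chamber containing $v$ only ever gets \emph{smaller} in the poset sense (we move from $\lambda^\circ$ toward its positive boundary, never crossing a negative wall), so the equivalence class of $(v,y)$ can only get coarser and the map remains well-defined and continuous. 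This produces a deformation retraction of $\tmu^{-1}(\lambda)$ onto $\tmu^{-1}(\partialp \lambda)$. A deformation retraction of a space onto a subspace makes the quotient contractible provided the retraction is through maps of pairs—here $\hat H$ restricted to $\partialp \lambda \times Y$ already lands in $\tmu^{-1}(\partialp \lambda)$ for all $t$ (indeed $r$ fixes $\partialp \lambda$), so we get a deformation retraction of pairs and hence the quotient $\tmu^{-1}(\lambda)/\tmu^{-1}(\partialp \lambda)$ is homotopy equivalent to a point.

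Alternatively, and perhaps more cleanly: since $\partialp\lambda \hookrightarrow \lambda$ is a cofibration (it is a CW-pair) and a deformation retract, the pair $(\lambda,\partialp\lambda)$ is contractible as a pair; crossing with $Y$ and then applying the quotient by $\rel$ (which is a quotient map collapsing fibres) preserves this, giving contractibility of $\tmu^{-1}(\lambda)/\tmu^{-1}(\partialp\lambda)$. I would likely present the explicit-homotopy version since it is self-contained.

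The main obstacle I anticipate is the bookkeeping needed to verify that the lifted homotopy $\hat H$ genuinely descends to a \emph{continuous} map on the identification space $\tmu^{-1}(\lambda)$: one must confirm that along the straight-line path from $v$ to $r(v)$ the point only exits positive walls $H_\alpha$ with $\alpha \in J$ (so that the relevant contraction $\phi_{\lambda'}$ for the face $\lambda'$ containing the path's point is coarser than, or equal to, $\phi_\lambda$), using exactly the sign computation in the proof of Lemma~\ref{lem:pos-boundary-retract}, and that the quotient topology behaves well (e.g. that $\tmu^{-1}(\lambda)$ carries the quotient topology from $\lambda \times Y$, which follows from properness of the quotient map $P\times Y \to \GC/\TR$ and compactness). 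Once that compatibility is in hand, contractibility is immediate.
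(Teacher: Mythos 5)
Your proof is correct and follows essentially the same route as the paper: the key input in both is Lemma~\ref{lem:pos-boundary-retract}, the deformation retraction of $\lambda$ onto $\partialp\lambda$, extended trivially in the fibre direction and pushed down to the quotient. The only (cosmetic) difference is that the paper first identifies $\tmu^{-1}(\lambda)/\tmu^{-1}(\partialp\lambda)$ with $(\lambda\times Y_\lambda)/(\partialp\lambda\times Y_\lambda)$ via Proposition~\ref{prop:GIT-chambers} and retracts that product, whereas you retract $\lambda\times Y$ upstairs and verify descent through $\rel$ using the compatibility of the contractions $\phi_{\lambda'}$ — both verifications amount to the same observation that the relation only coarsens toward $\partialp\lambda$.
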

\begin{proof}
  First note, that  \(\tilde\mu^{-1}(\lambda)/\tilde\mu^{-1}(\partialp \lambda)
    \;\cong\; \left(\lambda \times Y_\lambda\right)/ \left(\partialp \lambda \times Y_\lambda\right)\)
    by Proposition~\ref{prop:GIT-chambers}. Now Lemma~\ref{lem:pos-boundary-retract} says that  $\partialp \lambda \subset \lambda$ is a deformation retract of $\lambda$. Let $H \colon [0,1] \times \lambda \to \lambda$ be the deformation retraction.  Then
    \[H\!\times\!{\id} \colon ([0,1] \times \lambda) \times Y_\lambda \to \lambda \times Y_\lambda\] is a deformation retraction
    of $\lambda \times Y_\lambda$ onto  $\partialp \lambda \times Y_\lambda$
    and finally the composition of $H\!\times\!\id$ with the quotient map $\left(\lambda \times Y_\lambda\right) \to \left(\lambda \times Y_\lambda\right)/ \left(\partialp \lambda \times Y_\lambda\right)$ gives a deformation retraction of $\left(\lambda \times Y_\lambda\right)/ \left(\partialp \lambda \times Y_\lambda\right)$ onto the point $\left(\partialp \lambda \times Y_\lambda\right)/\left(\partialp \lambda \times Y_\lambda\right)$.
\end{proof}

\begin{proposition}
  \label{prop:relative-hom-V5/V4-V5/V0}
  The identity on $V_5$ induces an isomorphism of relative homology groups
  \(  
    H_i(V_{5},V_{4}) \cong H_i(V_{5},V_{0}).
  \)
\end{proposition}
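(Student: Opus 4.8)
The plan is to show that the intermediate filtration steps $V_1,\ldots,V_3$ contribute nothing to relative homology, i.e. that each inclusion $V_{k-1}\hookrightarrow V_k$ induces an isomorphism $H_i(V_5,V_{k-1})\cong H_i(V_5,V_k)$ for $k=1,\ldots,4$; composing these isomorphisms then gives $H_i(V_5,V_0)\cong H_i(V_5,V_4)$. By the long exact sequence of the triple $(V_5,V_k,V_{k-1})$ it suffices to prove that $H_i(V_k,V_{k-1})=0$ for all $i$ and all $k=1,\ldots,4$, so the whole statement reduces to a vanishing result for the relative homology of consecutive filtration steps.

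To prove $H_i(V_k,V_{k-1})=0$ I would use excision together with the description of the chambers. By construction $V_k = V_{k-1}\cup \tmu^{-1}(|\Lambda_k|)$, and $\tmu^{-1}(|\Lambda_k|)$ meets $V_{k-1}$ exactly in $\tmu^{-1}\bigl(\bigcup_{\lambda\in\Lambda_k}\partialp\lambda\bigr)$, using Lemma~\ref{lem:pos-boundary-intersect} and the fact noted before the Proposition that $\lambda\cap\bigcup_{i<k}|\Lambda_i| = \partialp\lambda$ for $\lambda\in\Lambda_k$. Excising the complement of $\tmu^{-1}(|\Lambda_k|)$, one is reduced to the homology of the pair $\bigl(\tmu^{-1}(|\Lambda_k|),\ \tmu^{-1}(\bigcup_{\lambda\in\Lambda_k}\partialp\lambda)\bigr)$, and since distinct chambers in $\Lambda_k$ meet only along positive boundaries (again Lemma~\ref{lem:pos-boundary-intersect}, as no chamber of a fixed dimension contains another), this pair decomposes as a wedge-type union over $\lambda\in\Lambda_k$ of the pairs $(\tmu^{-1}(\lambda),\tmu^{-1}(\partialp\lambda))$. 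The relative homology of each such pair is $\tilde H_i$ of the quotient $\tmu^{-1}(\lambda)/\tmu^{-1}(\partialp\lambda)$, which is contractible by Lemma~\ref{lem:contractible}; hence every summand vanishes and $H_i(V_k,V_{k-1})=0$.

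The main obstacle is making the ``wedge-type decomposition'' step rigorous: one must check that the various pairs $(\tmu^{-1}(\lambda),\tmu^{-1}(\partialp\lambda))$, $\lambda\in\Lambda_k$, can be glued along their common parts without creating extra homology, i.e. that $H_i$ of the union is the direct sum of the $H_i$ of the pieces. This requires verifying that the subspaces are suitably nice (e.g. that each $(\tmu^{-1}(\lambda),\tmu^{-1}(\partialp\lambda))$ is a good pair and that the intersections $\tmu^{-1}(\lambda)\cap\tmu^{-1}(\lambda')$ for $\lambda\neq\lambda'$ in $\Lambda_k$ sit inside the positive boundaries of both, so they are already ``collapsed'' in each quotient). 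Once this bookkeeping is in place — most cleanly via a Mayer--Vietoris induction on the number of chambers in $\Lambda_k$, with the inductive step using that the overlap $\tmu^{-1}(\lambda)\cap V_{k-1}\cup(\text{previous chambers})$ equals $\tmu^{-1}(\partialp\lambda)$ and that the quotient by it is contractible — the vanishing follows and the Proposition is proved. I would also remark that the same argument, applied with $V_4$ in place of $V_5$, is exactly what is needed in the subsequent Lemma~\ref{lem:relative-hom-V5/V4}.
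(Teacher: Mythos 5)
Your proposal follows essentially the same route as the paper's proof: reduce via the long exact sequence of the triple $(V_5,V_k,V_{k-1})$ to the vanishing of $H_i(V_k,V_{k-1})$, identify this relative homology with the reduced homology of the wedge $\bigvee_{\lambda\in\Lambda_k}\tmu^{-1}(\lambda)/\tmu^{-1}(\partialp\lambda)$ using Lemma~\ref{lem:pos-boundary-intersect}, and conclude by the contractibility of each wedge summand from Lemma~\ref{lem:contractible}. The additional care you devote to making the wedge decomposition rigorous (excision and Mayer--Vietoris) is a reasonable elaboration of the same argument rather than a different approach.
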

\begin{proof}
  We show that $H_i(V_{5},V_{4}) \cong H_i(V_{5},V_{5-k})$ by induction on $k=1,\ldots,5$.
  Assume we know that $H_i(V_{5},V_{4}) \cong H_i(V_{5},V_{5-k})$ then we consider the pair
  $V_{5-k-1} \subset V_{5-k}$ for its relative homology we have
  $H_i(V_{5-k-1},V_{5-k}) \cong \tH_i(V_{5-k-1}/V_{5-k})$, but then
   \[V_{k+1}/V_{k} \cong \bigvee_{\lambda \in \Lambda_{k+1}} \tmu^{-1}(\lambda)/\tmu^{-1}(\partialp \lambda), \]
   is contractible by Lemma~\ref{lem:contractible}. Hence, we obtain 
   $H_i(V_{5-k-1},V_{5-k})=0$. Now, the long exact sequence for the triple 
   $V_{5-k-1} \subset V_{5-k} \subset V_5$ implies that
   $H_i(V_5,V_{5-k-1}) \cong  H_i(V_{5},V_{5-k})$ and we obtain $H_i(V_5,V_{5-k-1}) \cong H_i(V_{5},V_{4})$ by induction hypothesis.
\end{proof}

Note, that by definition $V_4$ coincides with  the complement of $\tilde{\mu}^{-1}((P^+)^\circ)$ in $V_{5}=\GC/\TR$. Hence, we have $V_{5}/V_4\cong (P^+ \times Y)/(\partial P^+ \times Y)$.

\begin{lemma}
  \label{lem:relative-hom-V5/V4}
  The homology groups of  $V_{5}/V_4$ are given by
  \[
  H_i(V_5/V_4) \cong
  \begin{cases}
    \ZZ & i=4,8,0\\
    \ZZ^5 & i=6,\\
    0 & \text{else}
  \end{cases}
\]
Moreover,  $H_6(V_5/V_4)$ is generated by the images
of the homomorphisms
\[(\id \times\,\iota^{ij})_* \colon H_6((P^+ \times C_{ij})/(\partial P^+ \times C_{ij})) \to H_6(V_5/V_4),\]
where $1 \leq i < j \leq 5$ and $\iota^{ij} \colon C_{ij} \hookrightarrow Y$ denotes the inclusion of the $(-1)$-curve $C_{ij}$.
\end{lemma}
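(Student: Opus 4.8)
The plan is to compute $H_\ast(V_5/V_4)$ directly from the description $V_5/V_4 \cong (P^+\times Y)/(\partial P^+ \times Y)$, since $P^+$ is the $4$-dimensional nef chamber and its boundary $\partial P^+$ is a $3$-sphere. First I would recall that for any pair of spaces the quotient $(A\times Y)/(\partial A\times Y)$ with $A=P^+$ contractible and $\partial A\simeq S^3$ can be analysed through the cofibre sequence $\partial P^+\times Y \hookrightarrow P^+\times Y \to V_5/V_4$, whose associated long exact sequence in reduced homology reads $\widetilde H_i(\partial P^+\times Y)\to \widetilde H_i(P^+\times Y)\to \widetilde H_i(V_5/V_4)\to \widetilde H_{i-1}(\partial P^+\times Y)\to\cdots$. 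Because $P^+$ is contractible, $\widetilde H_\ast(P^+\times Y)\cong \widetilde H_\ast(Y)$, and the inclusion $\partial P^+\times Y\hookrightarrow P^+\times Y$ induces on homology exactly the projection $H_\ast(\partial P^+\times Y)\to H_\ast(Y)$ (the $S^3$ factor collapses). So the connecting map kills the degree-$0$ part of the $S^3$ Künneth summand and is an isomorphism onto the degree-$3$ part, giving $\widetilde H_i(V_5/V_4)\cong \widetilde H_{i-4}(Y)$ — i.e. $V_5/V_4$ behaves like a fourfold suspension of $Y_+$, which is what one expects since $P^+/\partial P^+\cong S^4$.

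Next I would insert the known homology of the del Pezzo surface $Y=\dP_5$, which is a smooth projective rational surface with $b_0=1$, $b_1=0$, $b_2=5$, $b_3=0$, $b_4=1$ and no torsion (it is the blowup of $\PP^2$ in four points, so $H_2(Y)\cong\ZZ^5$ is freely generated by $[E_1],\dots,[E_5]$). Shifting by $4$ then yields $H_0(V_5/V_4)\cong\ZZ$ (from the basepoint, handled separately in reduced vs.\ unreduced homology), $H_4(V_5/V_4)\cong\ZZ$, $H_6(V_5/V_4)\cong\ZZ^5$, $H_8(V_5/V_4)\cong\ZZ$, and zero otherwise, which is exactly the claimed table. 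The Künneth computation here is routine given that everything is torsion-free, so the only real content is bookkeeping the degree shift and being careful about reduced versus unreduced groups in degree $0$.

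For the generating statement, the point is to identify the $\ZZ^5$ in degree $6$ with classes coming from the $(-1)$-curves. Under the suspension-type isomorphism $\widetilde H_6(V_5/V_4)\cong \widetilde H_2(Y)=H_2(Y)$, and $H_2(Y)$ is generated by the classes of the ten exceptional curves $C_{ij}$ (with the single relation that they span a rank-$5$ lattice). Each inclusion $\iota^{ij}\colon C_{ij}\hookrightarrow Y$ carries the fundamental class of $C_{ij}\cong\PP^1$ to $[C_{ij}]\in H_2(Y)$, and the homomorphism $(\id\times\iota^{ij})_\ast$ on $H_6$ of the quotients is compatible with the suspension isomorphism — concretely, $H_6\big((P^+\times C_{ij})/(\partial P^+\times C_{ij})\big)\cong \widetilde H_2(C_{ij})\cong\ZZ$ maps to $\widetilde H_2(Y)$ exactly by $\iota^{ij}_\ast$. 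Since the classes $[C_{ij}]$ generate $H_2(Y)$, the images of the $(\id\times\iota^{ij})_\ast$ generate $H_6(V_5/V_4)$. I expect the main obstacle to be making the naturality argument fully precise: one has to check that the cofibre-sequence connecting homomorphism is natural with respect to the inclusion of the subspace $C_{ij}\subset Y$, i.e.\ that the diagram relating the long exact sequence for $(P^+\times Y,\partial P^+\times Y)$ to the one for $(P^+\times C_{ij},\partial P^+\times C_{ij})$ commutes, so that the suspension isomorphism intertwines $(\id\times\iota^{ij})_\ast$ with $\iota^{ij}_\ast$. This is a formal consequence of functoriality of the pair long exact sequence, but it should be spelled out; everything else is standard Künneth and the classical fact that the exceptional curves generate $H_2(\dP_5)$.
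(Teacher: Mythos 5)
Your proposal is correct and in substance the same as the paper's argument: both reduce the computation to a degree-$4$ shift of $H_*(Y)$ using $(P^+,\partial P^+)\cong(\D^4,\S^3)$, insert the standard homology of $\dP_5$, and deduce the generating statement from the fact that the classes $[C_{ij}]$ generate $H_2(Y)$ together with a naturality argument for the inclusion $C_{ij}\hookrightarrow Y$. The only difference is mechanical: the paper applies the relative K\"unneth theorem to the pairs $(P^+,\partial P^+)\times(Y,\emptyset)$ and invokes its naturality, whereas you run the cofibre long exact sequence and would need to invoke naturality of the connecting homomorphism instead — both are formal and the step you flag as needing care is exactly the one the paper handles via the commutative diagram of K\"unneth isomorphisms.
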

\begin{proof}
The non-trivial homology groups for $Y$, the blowup of $\PP^2$ in four points, are known to be 
\[
H_i(Y) \cong
\begin{cases}
  \ZZ & i=0,4\\
  \ZZ^5 & i=2.
\end{cases}
\]
Moreover, the homology group $H_2(Y) \cong \Pic(Y)$ is generated by
the fundamental classes of the $(-1)$-curves $C_{ij}$.

Applying the K\"unneth theorem to products of pairs $(P^+,\partial P^+) \times (C_{ij},\emptyset)$ and $(P^+,\partial P^+) \times (Y,\emptyset)$, respectively we obtain the following commutative diagram
%see Hatcher Appendix 3B
\begin{equation}
\begin{tikzcd}
\bigoplus_{r+s=m} H_r(P^+, \partial P^+) \otimes H_s(C_{ij}, \emptyset) \ar[d,"\id_* \otimes \, \iota^{ij}_*"]\ar[r,"\sim"] &   \ar[d,"(\id \times\, \iota^{ij})_*"] H_m(P^+ \times C_{ij}, \partial P^+\times C_{ij})\\
\bigoplus_{r+s=m} H_r(P^+, \partial P^+) \otimes H_s(Y, \emptyset) \ar[r,"\sim"] &  H_m(P^+ \times Y,\partial P^+ \times Y)
\end{tikzcd}\label{eq:kuenneth}
\end{equation}

Here, the horizontal isomorphisms are just the relative K\"unneth isomorphisms and the commutativity of the diagram just corresponds to the naturality of these isomorphisms. From the lower row of this diagram we directly conclude
\begin{align*}
 \tH_i(V_5/V_4) \cong H_i(P^+ \times Y,\partial P^+ \times Y) 
\cong
 \begin{cases}
   \ZZ & i=4,8\\
   \ZZ^5 & i=6\\
   0  & \text{else}.
 \end{cases}
\end{align*}
Moreover, the only nontrivial summands in
\[H_6(P^+ \times Y,\partial P^+ \times Y) \cong \bigoplus_{r+s=6} H_r(P^+, \partial P^+) \otimes H_s(Y, \emptyset)\]
and
\[H_6(P^+ \times C_{ij},\partial P^+ \times Y) \cong \bigoplus_{r+s=6} H_r(P^+, \partial P^+) \otimes H_s(C_{ij}, \emptyset)\]
are  $H_4(P^+, \partial P^+) \otimes H_2(Y, \emptyset)$ 
and $H_4(P^+, \partial P^+) \otimes H_2(C_{ij}, \emptyset)$, respectively. Since $H_2(Y)$ is generated by the images
of the homomorphisms \[\iota^{ij}_* \colon H_2(C_{ij}) \to  H_2(Y)\] it follows by (\ref{eq:kuenneth}) that
$H_6(P^+ \times Y,\partial P^+ \times Y)$ is generated by the images
of the homomorphisms \[(\id \times\, \iota^{ij})_* \colon H_6(P^+ \times C_{ij},\partial P^+ \times C_{ij}) \to H_6(P^+ \times Y,\partial P^+ \times Y).\]
Composing again with the map of pairs \[(P^+ \times Y,\partial P^+ \times Y) \to
  ((P^+ \times Y)/(\partial P^+ \times Y),*) \cong (V_5/V_4,*)\]
implies our claim.
\end{proof}

Lemma~\ref{lem:relative-hom-V5/V4} in combination with Proposition~\ref{prop:relative-hom-V5/V4-V5/V0} provides us with the relative homology groups of the pair $(V_5,V_0)$. Our next aims are to calculate the homology groups of $V_0$ and then use the long exact sequence of the pair $(V_5,V_0)$ to eventually determine the homology groups of $V_5=\GC/\TR$.

To study the homology groups of $V_0$ we follow the approach of \cite[12.1]{cgrassmann} and consider a section $s \colon P \to \GC/\TR$ of $\tmu$. Such a section can be constructed as follows. Fix an element $y \in Y$ and set
$s(u)=[(u,y)] \in (P \times Y)/{\rel}$.
Recall from Proposition~\ref{prop:GIT-chambers} that $\tmu^{-1}(O_i^\circ) \cong O_i^\circ \times \S^2$ and
$\tmu^{-1}(u) = \{s(u)\}$ for $u \not\in \bigcup_i  O_i^\circ$. 
Then $\tmu^{-1}(O_i^\circ) \setminus s(\partial P) \cong  O_i^\circ \times (\S^2 \setminus \{*\})$ as a product of open balls is itself homeomorphic to an open ball and its one-point compactification is a $5$-sphere, which we denote by $S^5_i$. On the other hand, we have
\[V_0 \cong s(\partial P) \cup \coprod_{i=1}^5 \left(\tmu^{-1}(O_i^\circ) \setminus s(\partial P)\right).\]
Note, that $V_0$ as a closed subset of a compact space is itself compact. Hence,
$V_0/s(\partial P)$ is compact as well. It follows that $V_0/s(\partial P)$ is the one-point compactification of $\coprod_{i=1}^5 \left(\tmu^{-1}(O_i^\circ) \setminus s(\partial P)\right)$, which by the above is homeomorphic $\bigvee_{i=1}^5 S^5_i$.

% \begin{lemma}
%   \label{lem:V0-quotient}
%   The quotient $V_0/s(\partial P)$ is isomorphic to $\bigvee$
% \end{lemma}

\begin{lemma}[{\cite[Lem.~29]{cgrassmann}}]
  \label{lem:V0}
  The non-trivial homology groups of $V_0$ are the following
\[
H_i(V_0,\ZZ) =
\begin{cases}
  \ZZ & i=0,3\\
  \ZZ^5 & i=5.
\end{cases}
\]
Moreover,
the homomorphism $H_3(\partial P) \to H_3(V_0)$ and
$H_5(V_0) \to H_5\left(\bigvee_i S_i^5\right)$, induced by the section $s$ and
the quotient map $V_0 \to V_0/s(\partial P)$, respectively, are isomorphisms.
\end{lemma}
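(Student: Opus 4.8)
The plan is to compute the homology of $V_0$ from the long exact sequence of the pair $(V_0, s(\partial P))$, feeding in the two ingredients that are essentially already available: on one hand, $s(\partial P)$ is homeomorphic to $\partial P$, which is the boundary of the $4$-dimensional polytope $P = \Delta_{2,5}$ and hence homeomorphic to $S^3$; on the other hand, it was shown in the discussion preceding the lemma that $V_0/s(\partial P) \cong \bigvee_{i=1}^5 S_i^5$. So the relative groups $H_n(V_0, s(\partial P))$ should be read off from $\tH_n(\bigvee_{i=1}^5 S_i^5)$, and the only genuine task is to run one long exact sequence.

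Concretely, I would first record $H_n(s(\partial P)) \cong H_n(S^3)$, which is $\ZZ$ for $n = 0, 3$ and trivial otherwise, and $H_n(V_0, s(\partial P)) \cong \tH_n(V_0/s(\partial P)) \cong \tH_n\bigl(\bigvee_{i=1}^5 S_i^5\bigr)$, which is $\ZZ^5$ for $n = 5$ and trivial otherwise. Then I would go through the exact sequence
\[
\cdots \to H_n(s(\partial P)) \to H_n(V_0) \to H_n(V_0, s(\partial P)) \to H_{n-1}(s(\partial P)) \to \cdots
\]
degree by degree: for $n \geq 6$, $n = 4$, and $n = 1,2$ the flanking groups vanish and $H_n(V_0) = 0$; for $n = 5$ one gets $0 \to H_5(V_0) \to \ZZ^5 \to 0$, giving $H_5(V_0) \cong \ZZ^5$ together with the fact that the natural map to $H_5(V_0, s(\partial P)) \cong H_5\bigl(\bigvee_i S_i^5\bigr)$ — i.e. the map induced by the quotient $V_0 \to V_0/s(\partial P)$ — is an isomorphism; for $n = 3$, using that $H_4$ and $H_3$ of the pair vanish, one gets $0 \to H_3(s(\partial P)) \to H_3(V_0) \to 0$, so $H_3(V_0) \cong \ZZ$ and the map $H_3(\partial P) \cong H_3(s(\partial P)) \to H_3(V_0)$ induced by $s$ is an isomorphism; and $H_0(V_0) \cong \ZZ$ because $V_0$ is connected (the sphere $s(\partial P)$ is connected, each $\tmu^{-1}(O_i)$ is connected, and $\tmu^{-1}(O_i) \cap s(\partial P) = s(\partial O_i) \neq \emptyset$). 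Both ``moreover'' statements of the lemma thus fall out of the $n = 3$ and $n = 5$ portions of the sequence.

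The computation itself is routine bookkeeping; the one point that needs a line of justification is the identification $H_n(V_0, s(\partial P)) \cong \tH_n(V_0/s(\partial P))$, i.e. that $(V_0, s(\partial P))$ is a good pair. This is where the local product structure from Proposition~\ref{prop:GIT-chambers} is used: near $\partial O_i$ the map $\tmu$ is a trivial bundle with one-point fibres, so $V_0 \setminus s(\partial P) = \coprod_i \bigl(\tmu^{-1}(O_i^\circ)\setminus s(\partial P)\bigr)$ is a disjoint union of open $5$-balls and $s(\partial P)$ admits a neighbourhood in $V_0$ that deformation retracts onto it; one may then excise, or simply invoke the resulting CW-pair structure. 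No serious obstacle arises beyond this.
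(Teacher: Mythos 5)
Your proposal is correct and follows exactly the route of the paper's (one-line) proof: feed $s(\partial P)\cong \S^3$ and $V_0/s(\partial P)\cong \bigvee_{i=1}^5 S^5_i$ into the long exact sequence of the pair $(V_0,s(\partial P))$ and read off the groups and the two ``moreover'' isomorphisms from the $n=3$ and $n=5$ segments. The only thing you add beyond the paper is the explicit justification that $(V_0,s(\partial P))$ is a good pair, which is a reasonable detail to spell out but not a different argument.
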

\begin{proof}
  This follows directly from the considerations above, the fact that $\partial P$ is homeomorphic to $\S^3$ and the long exact sequence for the pair $(V_0, s(\partial P))$.
\end{proof}

%Now, we recover Theorem~15 from \cite{2018arXiv180206449B} using by using our filtration of $\GC$.

\begin{proof}[Proof of Theorem~\ref{thm:main}]
The exact homology sequence for the pair $V_0 \subset V_5$ together with Proposition \ref{prop:relative-hom-V5/V4-V5/V0} and Lemma~\ref{lem:relative-hom-V5/V4} implies $H_i(V_5)=0$ for $i=1,2,7,8$. Moreover, we obtain the exact sequences
\begin{equation}
0 \to H_4(V_5) \longrightarrow \ZZ \longrightarrow \ZZ \stackrel{\iota_*}{\longrightarrow} H_3(V_5) \to 0.\label{eq:exact-seq-h4}
\end{equation}
and
\begin{equation}
0 \to H_6(V_5) \longrightarrow \ZZ^5 \stackrel{\partial_6}{\longrightarrow} \ZZ^5 \longrightarrow H_{5}(V_5) \to 0 \label{eq:exact-seq-h6}
\end{equation}
Now by Lemma~\ref{lem:inclusion}, below, the inclusion $\iota \colon V_0 \hookrightarrow V_5$ induces the trivial homomorphism $H_3(V_0) \to H_3(V_5)$.
It follows from the exactness of (\ref{eq:exact-seq-h4}) that $H_3(V_5)=0$ and that the central homomorphisms in (\ref{eq:exact-seq-h4}) is an isomorphism and eventually that $H_4(V_5) =0$.

We will show later in Proposition~\ref{prop:boundary-map} that the co-kernel of
the connecting homomorphism $\partial_6 \colon H_6(V_5,V_0) \to H_5(V_0)$ is
$\ZZ/2\ZZ$. Then the exactness of (\ref{eq:exact-seq-h6}) implies $H_5(V_5)\cong \ZZ/2\ZZ$ and $H_6(V_5)=0$.
\end{proof}

\begin{lemma}
  \label{lem:inclusion}
  Let $\iota \colon V_0 \to V_5 = \GC/\TR$ denote the inclusion map. Then
  the induced map $\iota_*\colon H_3(V_0) \to H_3(V_5)$ is trivial.
\end{lemma}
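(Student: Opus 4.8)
**Proof proposal for Lemma (triviality of $\iota_*\colon H_3(V_0) \to H_3(V_5)$).**

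The plan is to exhibit the generator of $H_3(V_0) \cong \ZZ$ explicitly and show that its image in $V_5$ bounds. By Lemma~\ref{lem:V0} the group $H_3(V_0)$ is generated by the image of the fundamental class of $\partial P \cong \S^3$ under the section $s\colon P \to \GC/\TR$; that is, $H_3(V_0)$ is generated by $s_*[\partial P]$ where $[\partial P] \in H_3(\partial P)$ is the fundamental class. So it suffices to show that $s_*[\partial P]$ vanishes in $H_3(V_5)$. The key observation is that, as a cycle in $V_5 = \GC/\TR \cong (P \times Y)/{\rel}$, the image $s(\partial P)$ visibly bounds: the section $s$ was constructed by fixing $y \in Y$ and setting $s(u) = [(u,y)]$, so $s(\partial P) = \{[(u,y)] \mid u \in \partial P\}$ and this extends to $s(P) = \{[(u,y)] \mid u \in P\}$, a ``disk'' whose boundary is $s(\partial P)$.

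First I would make precise that $s$ is continuous on all of $P$ (not just away from the octahedral facets): since $\{y\}$ is a single point it maps to a single $\rel$-class in every chamber, so $u \mapsto [(u,y)]$ is well-defined and continuous on $P$, and $P$ is a convex polytope, hence contractible. Therefore $s_*\colon H_3(P) \to H_3(V_5)$ factors through $H_3(P) = 0$. Next, the inclusion $s(\partial P) \hookrightarrow s(P)$ is, up to the homeomorphism $s$, just the inclusion $\partial P \hookrightarrow P$, which induces the zero map on $H_3$ because $H_3(P) = 0$. Composing with the inclusion $s(P) \hookrightarrow V_5$ shows that the composite $H_3(\partial P) \xrightarrow{s_*} H_3(V_0) \xrightarrow{\iota_*} H_3(V_5)$ is zero. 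Since $H_3(\partial P) \to H_3(V_0)$ is surjective (an isomorphism, by Lemma~\ref{lem:V0}), we conclude $\iota_* = 0$ on $H_3(V_0)$.

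The only point requiring a little care — and the one I would treat as the main obstacle, modest as it is — is the well-definedness and continuity of $s$ on the locus $\bigcup_i O_i^\circ$, where the fibre $\tmu^{-1}(u)$ is a whole $\S^2$ rather than a point; one must check that the chosen constant section $[(u,y)]$ does land in the image of the Goresky–MacPherson identification space coherently across chamber walls. This is immediate from the description \eqref{eq:ident-space}: the class $[(u,y)]$ is defined for every $u \in P$ by the single formula, and continuity is the statement that the quotient map $P \times Y \to (P\times Y)/{\rel}$ is continuous, restricted to the slice $P \times \{y\}$. Once this is in hand, the argument is purely formal: a $3$-cycle that is the section-image of $\partial S$ for a contractible $S$ must bound. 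I would also remark that this gives, via \eqref{eq:exact-seq-h4}, the vanishing $H_3(V_5) = H_4(V_5) = 0$ used in the proof of Theorem~\ref{thm:main}.
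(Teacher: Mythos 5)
Your proposal is correct and is essentially the paper's own argument: both use the section $s\colon P \to V_5$, the fact from Lemma~\ref{lem:V0} that $(s|_{\partial P})_*\colon H_3(\partial P)\to H_3(V_0)$ is an isomorphism, and the commutativity of the square relating $\partial P \hookrightarrow P$ to $V_0 \hookrightarrow V_5$ together with $H_3(P)=0$. Your extra remarks on the continuity and well-definedness of $s$ on all of $P$ are a harmless elaboration of a point the paper takes for granted when it first constructs $s$.
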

\begin{proof}  
  Consider the section $s \colon P \to V_5$ of $\tilde\mu$ from above. Then by Lemma~\ref{lem:V0} the induced homomorphism $(s|_{\partial P})_* \colon H_3(\partial P) \to H_3(V_0)$ is an isomorphism. On the other hand, we have the following commutative diagram.
\[\begin{tikzcd}
V_0 \arrow[r, hook, "\iota"] & V_5  \\
\partial P  \arrow[u,"s"] \arrow[r, hook] & P \arrow[u,"s"]
\end{tikzcd}\]
It follows that $\iota_* \colon H_3(V_0) \to H_3(V_5)$ must be trivial, as $H_3(P)=0$.
\end{proof}

In the following for a product $Q \times Z$ and $F \subset Q$ we define an equivalence relation as follows. For $u \in F$ we have
$(u,z) \stackrel{\scriptscriptstyle F}{\sim} (u,z')$ for all $z \in Z$. With this notation we have the following technical lemma
\begin{lemma}
  \label{lem:disc-homeo}
  Let $H^+ = \RR^{k-1} \times \RR_{\geq 0}$ be a closed halfspace with boundary hyperplane $H= \RR^{k-1} \times \{0\}$. Consider $\D^+ = \D^k \cap H^+$, $\S^+ = \S^{k-1}$ and $\D^{k-1} = \D^k \cap H$. Then there exists a homeomorphism
  \((\D^+ \times \S^m)/\,{\stackrel{\scriptscriptstyle\S^+}{\sim}} \cong \D^{k+m}\)  identifying $(\D^{k-1} \times \S^m)/{\stackrel{\,\scriptscriptstyle\S^+}{\sim}}$ with the boundary sphere $\S^{k+m-1}$.
  % and
%  \[(\D^+ \times \S^m)/{\stackrel{\scriptscriptstyle F}{\sim}} \cong \D^{k+m}\]  identifying $(\S^+ \times \S^m)/{\stackrel{\scriptscriptstyle F}{\sim}}$ with the boundary sphere $\S^{k+m-1}$.
\end{lemma}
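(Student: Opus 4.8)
The plan is to write down one explicit homeomorphism rather than to argue by a sequence of abstract collapses. First I fix convenient coordinates: realise $\D^k$ as the unit ball of $\RR^{k}=\RR^{k-1}\times\RR$ with $H^+=\RR^{k-1}\times\RR_{\geq 0}$ and $H=\RR^{k-1}\times\{0\}$, and write a point of $\D^+=\D^k\cap H^+$ as $(x',t)$ with $x'\in\RR^{k-1}$, $t\geq 0$ and $|x'|^2+t^2\leq 1$. Then $\D^{k-1}=\D^k\cap H$ is the slice $\{t=0\}$, while the relation $\stackrel{\scriptscriptstyle\S^+}{\sim}$ collapses the $\S^m$-fibre over each point of $\S^+\cap\D^+=\S^{k-1}\cap H^+$, i.e.\ over the ``round'' face $\{\,t=\sqrt{1-|x'|^2}\,\}$ of $\partial\D^+$; note $\S^+\cap\D^{k-1}=\S^{k-1}\cap H=\partial\D^{k-1}$.

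Next I introduce the candidate map. Regarding $\S^m$ as the boundary of the unit ball $\D^{m+1}\subset\RR^{m+1}$, I set
\[\Psi\colon \D^+\times\S^m \longrightarrow \RR^{k-1}\times\RR^{m+1},\qquad \Psi(x',t,z)=\bigl(x',\,(\sqrt{1-|x'|^2}-t)\,z\bigr).\]
On $\D^+$ the scalar $\sqrt{1-|x'|^2}-t$ is well defined and lies in $[0,1]$, so $\Psi$ is a continuous map into $\D^{k-1}\times\D^{m+1}$. I will then check: (i) over a point of $\S^+\cap\D^+$ the scalar vanishes, hence $\Psi(x',t,z)$ is there independent of $z$, so $\Psi$ descends to a continuous map $\bar\Psi$ on $(\D^+\times\S^m)/\stackrel{\scriptscriptstyle\S^+}{\sim}$; (ii) $\bar\Psi$ is injective --- from $\Psi(x',t,z)=\Psi(\bar x',\bar t,\bar z)$ one first gets $x'=\bar x'$, then comparing the norms of the second coordinates forces $\sqrt{1-|x'|^2}-t=\sqrt{1-|x'|^2}-\bar t$, hence $t=\bar t$; if this common scalar is positive then $z=\bar z$, and if it is $0$ then $t=\sqrt{1-|x'|^2}$, so the two points lie over $\S^+$ and are already identified; (iii) the image of $\Psi$ is exactly $\{(x',p)\in\RR^{k-1}\times\RR^{m+1}\mid |x'|^2+|p|^2\leq 1\}$, the unit ball of $\RR^{k+m}$, since for fixed $x'$ the second coordinate sweeps out precisely the ball of radius $\sqrt{1-|x'|^2}$ in $\RR^{m+1}$ as $(t,z)$ runs over $[0,\sqrt{1-|x'|^2}]\times\S^m$.

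Granting (i)--(iii), $\bar\Psi$ is a continuous bijection from the compact space $(\D^+\times\S^m)/\stackrel{\scriptscriptstyle\S^+}{\sim}$ onto the Hausdorff space $\D^{k+m}$, hence a homeomorphism. For the boundary statement I restrict $\Psi$ to the flat face $\{t=0\}$: there $\Psi(x',0,z)=(x',\sqrt{1-|x'|^2}\,z)$, and since $|x'|^2+(1-|x'|^2)|z|^2=1$ this maps into $\S^{k+m-1}=\partial\D^{k+m}$, surjectively (every $(x',p)$ with $|x'|^2+|p|^2=1$ is of this form). As $\stackrel{\scriptscriptstyle\S^+}{\sim}$ restricted to $\D^{k-1}$ only identifies fibres over $\S^+\cap\D^{k-1}=\partial\D^{k-1}$, the homeomorphism $\bar\Psi$ carries the subspace $(\D^{k-1}\times\S^m)/\stackrel{\scriptscriptstyle\S^+}{\sim}$ onto the boundary sphere $\S^{k+m-1}$, which is the asserted identification.

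I do not expect a genuine obstacle; the only points needing care are elementary --- that $\sqrt{1-|x'|^2}-t$ is non-negative and at most $1$ on all of $\D^+$ (so that $\Psi$ really lands in a product of disks), and the compact-to-Hausdorff argument, which applies because $(\D^+\times\S^m)/\stackrel{\scriptscriptstyle\S^+}{\sim}$ is a continuous image of the compact set $\D^+\times\S^m$. A formula-free alternative would be to identify $\D^+\cong\D^{k-1}\times[0,1]$ carrying $\S^+$ to $(\D^{k-1}\times\{1\})\cup(\partial\D^{k-1}\times[0,1])$, collapse $\S^m$ over $\D^{k-1}\times\{1\}$ to obtain $\D^{k-1}\times(\text{cone on }\S^m)=\D^{k-1}\times\D^{m+1}$, and then recognise the remaining collapse over $\partial\D^{k-1}\times[0,1]$ as the radial quotient $\D^{m+1}\to[0,1]$, $p\mapsto|p|$; but the explicit $\Psi$ above is shorter and makes the boundary identification transparent.
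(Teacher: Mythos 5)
Your proof is correct and follows essentially the same route as the paper: an explicit map of the form $(u,z)\mapsto(x',\rho(u)\,z)$ with a scalar $\rho$ vanishing exactly on $\S^+\cap\D^+$, followed by the compact-to-Hausdorff argument. The only difference is the choice $\rho=\sqrt{1-|x'|^2}-t$ instead of the paper's $\rho=\sqrt{1-|u|^2}$; both satisfy the required properties and agree on the flat face $\{t=0\}$, and your verification of injectivity, surjectivity and the boundary identification is more detailed than the paper's one-line proof.
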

\begin{proof}
The map
  \[
  (\D^+ \times S^m)/ {\stackrel{\scriptscriptstyle\S^+}{\sim}} \to \D^{k+m}, \quad [(u,y)] \mapsto
  (u_1,\sqrt{1-|u|^2}\cdot y)
  \]
  defines a homeomorphisms identifying $(\D^{k-1} \times \S^m)/{\stackrel{\scriptscriptstyle\S^+}{\sim}}$ with the boundary sphere. Here, for $u \in \RR^{k-1} \times \RR$ we denote by $u_1$ and $u_2$ the projection to the first and second factor, respectively. %Now, the claim follows from the fact that there is a homeomorphisms of pairs $f \colon (\D^+, \S^+) \stackrel{\sim}{\longrightarrow} (\D^+, F)$ given by \(f(u_1,u_2) = (u_1, \sqrt{1-|u_1|^2}-u_2).\)
\end{proof}

For every exceptional curve $C_{ij}\cong \PP^1 \cong \S^2$ we consider the polytopes $P_{ij}=H^+_{ij} \cap P$ and its facet $F_{ij}=P \cap H_{ij}$.
Then the curve $C_{ij}$ does \emph{not} get contracted under the map
$Y \to Y_u$ for $u \in P_{ij}^\circ$, but it gets contracted under this map if $u \in H^-_{ij} \cap P$. If $u \in \partial P$ then $C_{ij}$ gets \emph{not} contracted if and only if $u \in O_i^\circ$  or $u \in O_j^\circ$. Now, the inclusion $P_{ij} \times C_{ij} \subset P \times Y$ induces an embedding
\[\widetilde{C}_{ij} := {(P_{ij} \times C_{ij})/{\rel}} \lhook\joinrel\longrightarrow \; {(P \times Y)/{\rel}} \; \cong\;  \GC/\TR.\]
The pair $(P_{ij},F_{ij})$  is homeomorphic to the pair $(\D^+,\S^+)$ in Lemma~\ref{lem:disc-homeo}. Hence, by that Lemma we conclude that $D^6_{ij} := (P_{ij} \times C_{ij})/{\hrel}$ is homeomorphic to the closed disc $\D^6$, where the boundary $\S^5 \subset\D^6$ is being identified with  the sphere $S^5_{ij}:=((P_{ij} \cap \partial P) \times C_{ij})/{\hrel}$. Recall that $C_{ij}$ gets contracted under the map $Y \to Y_u$ for $u \in H_{ij}$, i.e. for $u \in H_{ij}$ we have $(u,y) \rel (u,y')$ for every two points $y,y' \in C_{ij}$. Hence, we obtain a map $\Psi^{ij} \colon D^6_{ij} \to V_5$ via the composition
\[\underbrace{(P_{ij}\times C_{ij})/{\hrel}}_{= D^6_{ij}} \;\rightarrow\; \underbrace{(P_{ij}\times C_{ij})/{\rel}}_{= \tC_{ij}} \;\hookrightarrow\; V_5\]
and $S^5_{ij} \subset D^6_{ij}$ is sent to $\tC_{ij} \cap V_0$ via this map. Hence the $\Psi^{ij}$ are maps of pairs $(D^6_{ij},S^5_{ij}) \to (V_5,V_0)$ for $1 \leq i < j \leq 5$.
These maps induce homomorphisms between the corresponding long exact sequences fitting into the following commutative diagram.
\begin{equation}
\begin{tikzcd}
\arrow[r] & H_6(S^5_{ij})  \arrow[r]\arrow[d,"\Psi^{ij}_*"] &  H_6(D^6_{ij}) \arrow[r]\arrow[d,"\Psi^{ij}_*"]  &  H_6(D^6_{ij},S^5_{ij})  \arrow[r,"\cong"]\arrow[d,"\Psi^{ij}_*"]  & H_{5}(S^5_{ij}) \arrow[r]\arrow[d,"\Psi^{ij}_*"] & \  \\
\arrow[r] & H_6(V_0)  \arrow[r] &  H_6(V_5) \arrow[r]  &  H_6(V_5,V_0)  \arrow[r]  & H_{5}(V_0) \arrow[r] & \  \\
\end{tikzcd}\label{eq:map-of-sequences}
\end{equation}

\begin{lemma}
  \label{lem:induced-orientation}
  Consider a continuous map of pairs $q \colon (X,\partial X) \to (Y/\partial Y,*)$. Where $X$ and $Y$ are compact orientable $n$-dimensional manifold with boundary. Assume that there exist an open subset $U \subset X \setminus \partial X$ which is mapped homeomorphically onto some open subset $V \subset Y/\partial Y$.
  
  Then $q$ induces an isomorphism
  \[q_* \colon H_n(X,\partial X) \xrightarrow{\sim}  \tH_n(Y/\partial Y).\]
\end{lemma}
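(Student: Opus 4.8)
The plan is to reduce the assertion to a local-homology computation. By Poincar\'e--Lefschetz duality both $H_n(X,\partial X)$ and $\tH_n(Y/\partial Y)\cong H_n(Y,\partial Y)$ are isomorphic to $\ZZ$ (we take $X$ and $Y$ connected, which is the situation in the application), so it suffices to show that $q_*$ carries a generator to a generator. The two standard facts I would use are: (i) for a compact connected orientable $n$-manifold with boundary, say $X$, the fundamental class generates $H_n(X,\partial X)$ and, for every interior point $x_0\in X\setminus\partial X$, restricts to a generator of $H_n(X,X\setminus\{x_0\})$ --- that is, the restriction homomorphism $\rho_X\colon H_n(X,\partial X)\to H_n(X,X\setminus\{x_0\})$ (induced by the inclusion of pairs, legitimate since $\partial X\subset X\setminus\{x_0\}$) is an isomorphism; and (ii) the corresponding statement for $Y$, namely that $H_n(Y,\partial Y)\to H_n(Y,Y\setminus\{y_0\})$ is an isomorphism for every $y_0\in Y\setminus\partial Y$.

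First I would choose a suitable base point. Since $q|_U$ is injective, $q^{-1}(*)\cap U$ consists of at most one point, so there is $x_0\in U$ with $\bar y_0:=q(x_0)\neq *$. The quotient map restricts to a homeomorphism $Y\setminus\partial Y\xrightarrow{\sim}(Y/\partial Y)\setminus\{*\}$, so $\bar y_0$ corresponds to an interior point $y_0\in Y\setminus\partial Y$, and after shrinking $U$ there is an open neighbourhood $W\ni x_0$ in $X\setminus\partial X$ with $q|_W\colon W\xrightarrow{\sim}q(W)$ a homeomorphism onto an open subset of $(Y/\partial Y)\setminus\{*\}$.

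The heart of the proof is the commuting square (vertical maps induced by inclusions of pairs)
\[\begin{tikzcd}
H_n(X,\partial X)\ar[r,"q_*"]\ar[d,"\rho_X"'] & \tH_n(Y/\partial Y)\ar[d,"\rho_Y"]\\
H_n\bigl(X,X\setminus\{x_0\}\bigr)\ar[r,"q_*"'] & H_n\bigl(Y/\partial Y,(Y/\partial Y)\setminus\{\bar y_0\}\bigr).
\end{tikzcd}\]
Here $\rho_X$ is an isomorphism by (i). For $\rho_Y$ I would excise the point $*$ (its closure lies in the open set $(Y/\partial Y)\setminus\{\bar y_0\}$), use the homeomorphism $(Y/\partial Y)\setminus\{*\}\cong Y\setminus\partial Y$, and then excise $\partial Y$; this identifies the lower-right group with $H_n(Y,Y\setminus\{y_0\})$, and by naturality of the quotient map $Y\to Y/\partial Y$ it turns $\rho_Y$ into the restriction homomorphism of (ii), hence an isomorphism. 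For the bottom map I would excise $X\setminus W$ on the left and $(Y/\partial Y)\setminus q(W)$ on the right, identifying it with the isomorphism on local homology induced by the homeomorphism $q|_W$. Chasing the square then yields $q_*=\rho_Y^{-1}\circ(\text{bottom }q_*)\circ\rho_X$, an isomorphism.

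I expect the only point requiring care to be the identification of $\rho_Y$ with the restriction homomorphism $H_n(Y,\partial Y)\to H_n(Y,Y\setminus\{y_0\})$, i.e.\ tracking the quotient-map naturality through the successive excision isomorphisms; the facts (i) and (ii) about fundamental classes and local orientations of compact connected orientable manifolds with boundary are standard, and the remainder of the argument is pure functoriality.
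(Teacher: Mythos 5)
Your argument is essentially the paper's own proof: both localise at an interior point $x_0\in U$, use that the fundamental class of a compact oriented manifold with boundary restricts to a generator of the local homology at any interior point, pass through excision to the local homeomorphism $q|_U$, and identify $\tH_n(Y/\partial Y)$ with the local homology of $Y/\partial Y$ at an interior point via the quotient map $H_n(Y,\partial Y)\to\tH_n(Y/\partial Y)$. The differences are cosmetic (one square whose other three edges you then identify by excision, versus the paper's two three-term ladders), plus your harmless detour through Poincar\'e--Lefschetz duality and your explicit choice of $x_0$ with $q(x_0)\neq *$, which the paper leaves implicit.

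One point deserves attention, and it affects the paper's proof in exactly the same way: your bottom horizontal map $q_*\colon H_n(X,X\setminus\{x_0\})\to H_n\bigl(Y/\partial Y,(Y/\partial Y)\setminus\{\bar y_0\}\bigr)$ is a map of pairs only if $q(X\setminus\{x_0\})$ avoids $\bar y_0$, i.e.\ only if $x_0$ is the \emph{unique} preimage of $\bar y_0$ in all of $X$. The stated hypotheses give injectivity of $q$ on $U$ but say nothing about points of $X\setminus U$ landing in $V$, and without such a condition the conclusion is false: the map $[-1,1]\to[-1,1]/\{\pm1\}\cong\S^1$, $t\mapsto e^{2\pi i t}$, satisfies every stated hypothesis yet induces multiplication by $2$ on $H_1$. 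So your proof (like the paper's) tacitly uses the extra hypothesis $q^{-1}(V)=U$, which does hold in the intended applications because there $q$ collapses precisely the complement of $U$ to the basepoint; once that is added, your diagram chase goes through and the two proofs coincide.
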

\begin{proof}

We set $Z=Y/\partial Y$ and pick a point $y \in Y^\circ:=Y \setminus \partial Y$, which we identify with its image in $Z$. We than have $H_n(Z, Z \setminus y)\cong \tH_n(Z)$. Indeed consider the following diagram
\[
     \begin{tikzcd}
       H_n(Y,\partial Y) \arrow[d,"\rotatebox{90}{\(\sim\)}"'] \arrow[r,"\sim"] & \ar[d] H_n(Y,Y\setminus y) &\arrow[l,"\sim"'] \ar["\rotatebox{90}{\(\sim\)}"']{d} H_n(Y^\circ, Y^\circ \setminus y)\\
       \tH_n(Z) \arrow[r] &  H_n(Z,Z\setminus y) & \arrow[l,"\sim"'] H_n(Y^\circ, Y^\circ \setminus y)
     \end{tikzcd}       
\]
Both sides of the diagram commute as the homomorphisms are induced by inclusions of pairs and the quotient map. The horizontal arrows on the right are isomorphisms by excision. By commutativity of the right square the central vertical arrow has to be an isomorphism and the commutativity of the left square implies that the lower left horizontal arrow is an isomorphism as well.  

We argue similarly as before. For every $x \in U \cap \phi^{-1}(Y^\circ)$ we get essentially the same diagram as above
   \[
     \begin{tikzcd}
       H_n(X,\partial X) \arrow[d,"q_*"] \arrow[r,"\sim"] & \ar[d,"q_*"] H_n(X,X\setminus x) &\arrow[l,"\sim"'] \ar["\rotatebox{90}{\(\sim\)}"',"q_*"]{d} H_n(U, U\setminus x)\\
       \tH_n(Z) \arrow[r,"\sim"] &  H_n(Z,Z\setminus q(x)) & \arrow[l,"\sim"'] H_n(V, V\setminus q(x))
     \end{tikzcd}       
   \]
with both squares commuting. This time we know that the horizontal arrow in the lower left is an isomorphism due to the above considerations. Arguing as before shows that the left vertical arrow is an isomorphism.
\end{proof}

\begin{lemma}
  \label{lem:generators}
  The classes $\Psi^{ij}_*[D^6_{ij}, S^5_{ij}]$ generate the relative homology group $H_6(V_5,V_0)$.
\end{lemma}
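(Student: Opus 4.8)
The plan is to relate the relative homology group $H_6(V_5,V_0)$ to the already-computed group $H_6(V_5/V_4) \cong H_6(V_5,V_4)$ via Proposition~\ref{prop:relative-hom-V5/V4-V5/V0}, and then to track the generators of the latter through the isomorphism. Recall from Lemma~\ref{lem:relative-hom-V5/V4} that $H_6(V_5,V_4) \cong H_6((P^+\times Y)/(\partial P^+\times Y))$ is generated by the images of the classes coming from the subspaces $(P^+ \times C_{ij})/(\partial P^+ \times C_{ij})$ under the maps induced by the inclusions $\iota^{ij}\colon C_{ij}\hookrightarrow Y$. So the first step is to identify, for each pair $i<j$, a natural map from $D^6_{ij}$ into the picture that realizes exactly this generator, and to show it factors through $\Psi^{ij}$.

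The key observation is that the central chamber $P^+$ is a face of the polytope $P_{ij}=H^+_{ij}\cap P$, since $P^+ = \Nef(Y)$ consists of classes $[D]$ with $[D]\cdot[C_{ij}]\geq 0$ for all exceptional curves, in particular for $C_{ij}$. More precisely $P^+$ sits inside $P_{ij}$ with $\partialp P^+ = \partial P^+ \supset$ (the part of $\partial P_{ij}$ cut off), and the contraction $\phi_\lambda$ restricted to $C_{ij}$ is an isomorphism onto its image whenever $u\in P_{ij}^\circ$, in particular for $u$ in the interior of $P^+$. Hence composing the inclusion $(P^+\times C_{ij})/(\partial P^+\times C_{ij}) \hookrightarrow (P_{ij}\times C_{ij})/{\hrel} = D^6_{ij}$ — which I would check is well-defined and compatible with the relations $\rel$ and $\hrel$ — with $\Psi^{ij}$ gives, up to the homeomorphism $V_5/V_4\cong (P^+\times Y)/(\partial P^+\times Y)$, precisely the map $(\id\times\iota^{ij})_*$ appearing in Lemma~\ref{lem:relative-hom-V5/V4}. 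Therefore the images $\Psi^{ij}_*[D^6_{ij},S^5_{ij}]$, pushed forward to $H_6(V_5,V_4)$, generate that group; and since the quotient map $H_6(V_5,V_0)\to H_6(V_5,V_4)$ is an isomorphism by Proposition~\ref{prop:relative-hom-V5/V4-V5/V0} (using the identity on $V_5$, which is compatible with the inclusion $V_4 \hookrightarrow$ nothing new — one must only note $V_0\subset V_4$ so the natural map $H_6(V_5,V_0)\to H_6(V_5,V_4)$ exists and is the claimed iso), the classes $\Psi^{ij}_*[D^6_{ij},S^5_{ij}]$ generate $H_6(V_5,V_0)$ as well.

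I expect the main obstacle to be the careful bookkeeping in the first two steps: verifying that the inclusion $(P^+\times C_{ij})/(\partial P^+\times C_{ij}) \hookrightarrow D^6_{ij}$ is genuinely well-defined (the relation $\hrel$ collapses $C_{ij}$ over the facet $F_{ij}$, whereas $\partial P^+$ need not meet $F_{ij}$, so one needs that the image class is nonetheless detected), and that under $\Psi^{ij}$ and the homeomorphism $V_5/V_4\cong(P^+\times Y)/(\partial P^+\times Y)$ this composite really is $(\id\times\iota^{ij})_*$ on the nose rather than up to some correction. A clean way to organize this is to invoke Lemma~\ref{lem:disc-homeo} to fix, once and for all, a homeomorphism $D^6_{ij}\cong\D^6$ identifying $S^5_{ij}$ with the boundary sphere, so that $[D^6_{ij},S^5_{ij}]$ is an unambiguous generator of $H_6(D^6_{ij},S^5_{ij})\cong\ZZ$; then the whole argument reduces to a naturality statement for the relative Künneth isomorphism in diagram~(\ref{eq:kuenneth}), combined with the commutative diagram~(\ref{eq:map-of-sequences}). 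Once the identification with $(\id\times\iota^{ij})_*$ is in place, Lemma~\ref{lem:relative-hom-V5/V4} does the rest and the conclusion follows.
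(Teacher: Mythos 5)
Your overall strategy matches the paper's: reduce to $H_6(V_5,V_4)\cong \tH_6(V_5/V_4)$ via Proposition~\ref{prop:relative-hom-V5/V4-V5/V0}, use the generators from Lemma~\ref{lem:relative-hom-V5/V4}, and compare $D^6_{ij}$ with $(P^+\times C_{ij})/(\partial P^+\times C_{ij})$. But the comparison map you propose goes the wrong way and does not exist. There is no inclusion $(P^+\times C_{ij})/(\partial P^+\times C_{ij})\hookrightarrow D^6_{ij}$: the left-hand space collapses all of $\partial P^+\times C_{ij}$ to a single point, whereas in $D^6_{ij}=(P_{ij}\times C_{ij})/{\hrel}$ only the fibres over $F_{ij}$ are collapsed, and $\partial P^+$ is not contained in $F_{ij}$; you half-notice this yourself but do not resolve it. The map that actually exists is the \emph{quotient} $\phi\colon (D^6_{ij},S^5_{ij})\to ((P^+\times C_{ij})/(\partial P^+\times C_{ij}),*)$ collapsing the complement of $(P^+)^\circ\times C_{ij}$ (which contains $S^5_{ij}$, since $(P^+)^\circ$ misses $\partial P$). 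This is the direction the paper uses.

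With the map pointing that way, the substantive issue your sketch leaves open is why $\phi_*$ sends $[D^6_{ij},S^5_{ij}]$ to a \emph{generator} of $\tH_6((P^+\times C_{ij})/(\partial P^+\times C_{ij}))\cong\ZZ$ rather than to some multiple of it; if $\phi$ only had degree $d>1$, your classes would generate a proper finite-index subgroup of $H_6(V_5,V_0)$ and the downstream cokernel computation in Proposition~\ref{prop:boundary-map} would change. Naturality of the K\"unneth isomorphism and diagram~(\ref{eq:map-of-sequences}) do not supply this; one needs a genuine degree argument. The paper isolates it as Lemma~\ref{lem:induced-orientation}: since $\phi$ restricts to a homeomorphism from the open set $(P^+)^\circ\times C_{ij}\subset D^6_{ij}$ onto its image, a local-homology/excision comparison at an interior point shows $\phi_*$ is an isomorphism on $H_6$. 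Once that is in place, the commutative square with $\Psi^{ij}$ on one side and $\id\times\iota^{ij}$ on the other, together with Proposition~\ref{prop:relative-hom-V5/V4-V5/V0} and Lemma~\ref{lem:relative-hom-V5/V4}, finishes the proof exactly as you intend. So: right scaffolding, but the key arrow is reversed and the degree-one statement it must carry is missing.
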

\begin{proof}
  Recall, that $D^6_{ij}=P_{ij} \times C_{ij}/{\hrel}$. By the definition of $P^+$ we have $P^+ \subset P_{ij}$ and $H_{ij} \cap P^+ \subset \partial P^+$. Hence, $(P^+ \times C_{ij})/(\partial P^+ \times C_{ij})$ can be identified with the quotient of $D^6_{ij}$ by the complement of $(P^+)^\circ \times C_{ij}$ and  we obtain a map of pairs
  \[\phi \colon (D^6_{ij}, S^5_{ij}) \to ((P^+ \times C_{ij})/(\partial P^+ \times C_{ij}),*).\]
  Moreover, this map fits into the following commutative diagram
 \[\begin{tikzcd}
     (D^6_{ij}, S^5_{ij}) \ar[d,"\Psi^{ij}"] \ar[rr,"\phi"]&\ &((P^+ \times C_{ij})/(\partial P^+ \times C_{ij}),*) \ar[d,"\id \times\, \iota^{ij}"] &\\
 (V_5,V_0)  \ar[r,hook] & (V_5,V_4) \ar[r] & (V_5/V_4,*) \cong ((P^+ \times Y)/(\partial P^+ \times Y),*)
\end{tikzcd}
\]
By Proposition~\ref{prop:relative-hom-V5/V4-V5/V0} we see that in the induced diagram on the level of homology we have isomorphisms in the bottom row. In the top row we have at least an isomorphism for $H_6$ by Lemma~\ref{lem:induced-orientation} as $(P^+)^\circ \times C_{ij} \subset D^6_{ij}$ is an open subset which gets mapped homeomorphically onto it image in $(P^+ \times C_{ij})/(\partial P^+ \times C_{ij})$.

By Lemma~\ref{lem:relative-hom-V5/V4} we know that $\tH_6(V_5/V_4)$ is generated by images of the homology groups $\tH_6((P^+ \times C_{ij})/(\partial P^+ \times C_{ij}))$. By the commutativity of the diagram we see that the images
of the groups $H_6(D^6_{ij},S^5_{ij})$ have to generate $H_6(V_5,V_0)$.
\end{proof}

\begin{proposition}
  \label{prop:boundary-map}
  The connecting homomorphism $\partial_6$ of the long exact sequence
 \[\ldots \to H_6(V_0) \to H_6(V_5) \to H_6(V_5,V_0) \stackrel{\partial_6}{\longrightarrow} H_5(V_0) \to \ldots \ \]
  has cokernel $\ZZ/2\ZZ$ .
\end{proposition}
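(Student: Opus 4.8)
The plan is to make $\partial_6$ completely explicit on the generators of $H_6(V_5,V_0)$ supplied by Lemma~\ref{lem:generators}, to locate the resulting classes inside $H_5(V_0)$ via Lemma~\ref{lem:V0}, and then to finish with a one-line lattice computation. Since $(D^6_{ij},S^5_{ij})\cong(\D^6,\S^5)$, the connecting homomorphism $H_6(D^6_{ij},S^5_{ij})\to H_5(S^5_{ij})$ is an isomorphism taking the relative fundamental class to $[S^5_{ij}]$; commutativity of the right-hand square of~(\ref{eq:map-of-sequences}) then gives $\partial_6\bigl(\Psi^{ij}_*[D^6_{ij},S^5_{ij}]\bigr)=\Psi^{ij}_*[S^5_{ij}]$. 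As the classes $\Psi^{ij}_*[D^6_{ij},S^5_{ij}]$ generate $H_6(V_5,V_0)$ by Lemma~\ref{lem:generators}, the image of $\partial_6$ is the subgroup of $H_5(V_0)$ generated by the ten classes $\Psi^{ij}_*[S^5_{ij}]$. Finally, by Lemma~\ref{lem:V0} the quotient map $V_0\to V_0/s(\partial P)\cong\bigvee_{k=1}^5 S^5_k$ identifies $H_5(V_0)$ with $\bigoplus_{k=1}^5\ZZ\,[S^5_k]$, and I write $e_k$ for the basis vector $[S^5_k]$.

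The bulk of the work is to compute each $\Psi^{ij}_*[S^5_{ij}]$ in this basis, which I would do by a local degree count for the composite $S^5_{ij}\xrightarrow{\Psi^{ij}}V_0\to\bigvee_l S^5_l\to S^5_k$. Over $O_k^\circ$ with $k\notin\{i,j\}$ the curve $C_{ij}$ is a component of a reducible fibre of the $k$-th conic bundle, hence is contracted by $Y\to Y_u$ for every $u\in O_k^\circ$; thus $\Psi^{ij}$ collapses each $C_{ij}$-fibre over $O_k^\circ$, its image there is at most $3$-dimensional, and the composite above has degree $0$. Over $O_k^\circ$ with $k\in\{i,j\}$, on the other hand, $H_{ij}\cap O_k^\circ=\emptyset$ (since $[D]\cdot[C_{ij}]>0$ on $O_k^\circ$), so the part of $S^5_{ij}$ lying over $O_k^\circ$ is genuinely $O_k^\circ\times C_{ij}$, and there $\Psi^{ij}$ is $(u,c)\mapsto(u,\phi_u(c))$ with $\phi_u|_{C_{ij}}\colon C_{ij}\xrightarrow{\ \sim\ }Y_u\cong\PP^1$ an isomorphism of complex curves; hence $\Psi^{ij}$ maps this set homeomorphically onto the open subset $\tmu^{-1}(O_k^\circ)$ of $V_0$, which after collapsing $s(\partial P)$ is open in $S^5_k$, so the composite has a single transverse preimage and degree $\pm1$. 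Therefore $\Psi^{ij}_*[S^5_{ij}]=\pm e_i\pm e_j$.

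Pinning down the two signs is the delicate step, and the one I expect to cost the most care, since the answer is genuinely sign-sensitive (the sublattice $\langle e_i-e_j\rangle$ has cokernel $\ZZ$, not $\ZZ/2$). Here I would use the complex structure. Orient $D^6_{ij}=(P_{ij}\times C_{ij})/\hrel$ as the product of a fixed ambient orientation of $P_{ij}$ (restricted from an orientation of the affine hyperplane $\{[K_Y]\cdot v=-1\}\subset N^1$) with the complex orientation of $C_{ij}\cong\PP^1$, and orient each $S^5_k$ so that $\tmu^{-1}(O_k^\circ)=O_k^\circ\times\PP^1$ carries the product of the boundary orientation of $O_k^\circ\subset\partial P$ with the complex orientation of the fibre $Y_u\cong\PP^1$. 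The key observation is that near $O_i^\circ$ (and near $O_j^\circ$) one has $[D]\cdot[C_{ij}]>0$, so $P_{ij}$ coincides with $P$ on a neighbourhood; hence the boundary orientation which $S^5_{ij}=\partial D^6_{ij}$ induces over $O_i^\circ$ is exactly the product of the boundary orientation of $O_i^\circ\subset\partial P$ with the complex orientation of $C_{ij}$, and likewise over $O_j^\circ$ — the point being that this is \emph{one} coherent orientation of $S^5_{ij}$, restricting over the two facets in the same way relative to the complex orientation of the fixed curve $C_{ij}$. Since $\phi_u$ is holomorphic it preserves the complex orientations of $C_{ij}$ and of $Y_u$, so both local degrees are $+1$ for these choices, i.e. $\Psi^{ij}_*[S^5_{ij}]=e_i+e_j$.

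Hence $\operatorname{im}\partial_6=\langle e_i+e_j\mid 1\le i<j\le 5\rangle$. This sublattice contains every difference $e_i-e_j$, hence the root lattice $A_4:=\langle e_i-e_j\rangle$, and it equals $A_4+\ZZ(e_1+e_2)$; since $\ZZ^5/A_4\cong\ZZ$ via $x\mapsto\sum_l x_l$ and $e_1+e_2$ maps to $2$, it follows that $\operatorname{coker}\partial_6=H_5(V_0)/\operatorname{im}\partial_6\cong\ZZ/2\ZZ$, as claimed. (In particular $\partial_6$ then has full rank, so it is injective, which is the form in which the proposition is used to conclude $H_6(V_5)=0$ in the proof of Theorem~\ref{thm:main}.)
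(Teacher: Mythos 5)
Your proof is correct and follows essentially the same route as the paper's: identify the generators $\Psi^{ij}_*[D^6_{ij},S^5_{ij}]$ of $H_6(V_5,V_0)$, compute $\partial_6$ on them as $\Psi^{ij}_*[S^5_{ij}]=[S^5_i]+[S^5_j]$ by a degree count over each $O_k^\circ$, and observe that the sublattice $\langle e_i+e_j\rangle$ has index $2$ in $\ZZ^5$. Your explicit treatment of the sign coherence (via the holomorphy of $\phi_u|_{C_{ij}}$ and the single coherent orientation of $S^5_{ij}$) is a welcome elaboration of the point the paper disposes of by invoking Lemma~\ref{lem:induced-orientation}.
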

\begin{proof}
  Recall how the map $\partial_6$ is constructed. A relative cycle
  of the pair $(V_5,V_0)$ is just mapped to its boundary, which by definition lies in $V_0$. In our case we consider the cycles $\Psi^{ij}_*[D^6_{ij},S^5_{ij}]$. The image under the connecting homomorphism of such a relative cycle is just given by $\Psi^{ij}_*[S^5_{ij}] \in H_5(V_0)$. By Lemma~\ref{lem:V0} we have
  \[H_5(V_0) \cong H_5(V_0/s(\partial P))  \cong H_5\left(\bigvee_{\ell=1}^5  S_{\ell}^5 \right) \cong \bigoplus_{\ell=1}^5 H_5(S_{\ell}^5)  \cong \ZZ^5.\]
 A choice of an orientation for $\partial P$ and $\S^2$ induces an orientation
for each of the $S_{\ell}^5$ and by the above this fixes a basis for $H_5(V_0)$.
More precisely, we first obtain an orientation on $\partial P \times \S^2$.
Now, recall that $S^5_\ell$ was the one-point compactification of $O_\ell^\circ \times (\S^2\setminus \{*\})$. Hence, the latter can be regarded as an open subset of both $S_{\ell}^5$ and $\partial P \times \S^2$. We obtain an induced orientation via Lemma~\ref{lem:induced-orientation}.

Now, we consider the degree of the composition
  \begin{equation}
S^5_{ij} \to V_0 \to V_0/s(\partial P) \to S^5_\ell.\label{eq:map-of-spheres}
\end{equation}
Recall that $S^5_{ij}$ was obtained as $((H^+_{ij} \cap \partial P) \times C_{ij})/{\hrel}$ 
and  that $C_{ij}$ was contracted to a point $y \in Y_u \cong \PP^1$ by $Y \to Y_u$ for $u \in O_\ell$ and $\ell \neq i,j$. It follows that the map in (\ref{eq:map-of-spheres}) is not surjective in this case and, hence,  has degree $0$. For $\ell=i$ or $\ell=j$ Lemma~\ref{lem:induced-orientation} shows  that the map has degree $1$. Indeed, identifying $C_{ij}$ with $\S^2$ we see that $O_\ell^\circ \times (\S^2\setminus \{*\})$  is mapped homeomorphically onto its image in $S^5_\ell$.

By Lemma~\ref{lem:generators} the classes $\Psi^{ij}_*[D^6_{ij}, S^5_{ij}]$ generate $H_6(V_5,V_0)$.  Hence, the image of $\partial_6$ in $H_5(V_0)$ is generated by the sums $[S_i^5] + [S_j^5] \in \bigoplus_{\ell=1}^5 H_5(S_{\ell}^5)$ with $1 \leq i, j \leq 5$. But this is the kernel of the surjective map
  \[H_5(V_0) \to \ZZ/2\ZZ;\quad [S_i^5] \mapsto 1 \quad \forall_{i \in \{1,\ldots,5\}}.\]
\end{proof}

\bibliography{topquot}

\begin{thebibliography}{{Che}19}

\bibitem[Ayz18]{Ayzenberg2018}
Anton~A. Ayzenberg.
\newblock Torus actions of complexity 1 and their local properties.
\newblock {\em Proceedings of the Steklov Institute of Mathematics},
  302(1):16--32, Aug 2018.

\bibitem[BT16]{zbMATH06783273}
Victor~M. {Buchstaber} and Svjetlana {Terzi\'c}.
\newblock {Topology and geometry of the canonical action of $T^4$ on the
  complex Grassmannian $G_{4,2}$ and the complex projective space
  $\mathbb{C}P^5$.}
\newblock {\em {Mosc. Math. J.}}, 16(2):237--273, 2016.

\bibitem[BT18]{cgrassmannp}
Victor~M. {Buchstaber} and Svjetlana {Terzic}.
\newblock {Toric topology of the complex Grassmann manifolds}.
\newblock {\em arXiv e-prints}, page arXiv:1802.06449v3, Feb 2018,
  1802.06449v3.

\bibitem[BT19a]{2018arXiv180305766B}
V.~M. Buchstaber and S.~Terzi{\'{c}}.
\newblock The foundations of $(2n,k)$-manifolds.
\newblock {\em Sbornik: Mathematics}, 210(4):508--549, apr 2019.

\bibitem[BT19b]{cgrassmann}
Victor~M. {Buchstaber} and Svjetlana {Terzi\'c}.
\newblock {Toric topology of the complex Grassmann manifolds}.
\newblock {\em Moscow Mathematical Journal}, 19:397--463, 2019.

\bibitem[{Che}19]{2019arXiv190502294C}
V.~{Cherepanov}.
\newblock {Orbit spaces of torus actions on Hessenberg varieties}.
\newblock {\em arXiv e-prints}, page arXiv:1905.02294, May 2019, 1905.02294.

\bibitem[DH98]{zbMATH01560390}
Igor~V. {Dolgachev} and Yi~{Hu}.
\newblock {Variation of geometric invariant theory quotients. (With an
  appendix: ``An example of a thick wall'' by Nicolas Ressayre).}
\newblock {\em {Publ. Math., Inst. Hautes \'Etud. Sci.}}, 87:5--56, 1998.

\bibitem[{Dol}12]{zbMATH06053083}
Igor~V. {Dolgachev}.
\newblock {\em {Classical algebraic geometry. A modern view.}}
\newblock Cambridge: Cambridge University Press, 2012.

\bibitem[GM87]{zbMATH04029746}
Mark {Goresky} and Robert {MacPherson}.
\newblock {On the topology of algebraic torus actions.}
\newblock {Algebraic groups, Proc. Symp., Utrecht/Neth. 1986, Lect. Notes Math.
  1271, 73-90 (1987).}, 1987.

\bibitem[HK00]{zbMATH01700876}
Yi~{Hu} and Sean {Keel}.
\newblock {Mori dream spaces and GIT.}
\newblock {\em {Mich. Math. J.}}, 48:331--348, 2000.

\bibitem[{Hu}92]{zbMATH00108945}
Yi~{Hu}.
\newblock {The geometry and topology of quotient varieties of torus actions.}
\newblock {\em {Duke Math. J.}}, 68(1):151--184, 1992.

\bibitem[KT18]{kt-top}
Yael {Karshon} and Susan {Tolman}.
\newblock {Topology of complexity one quotients}.
\newblock {\em arXiv e-prints}, Oct 2018, 1810.01026.

\bibitem[{Sko}93]{zbMATH00553990}
Alexei~N. {Skorobogatov}.
\newblock {On a theorem of Enriques--Swinnerton-Dyer.}
\newblock {\em {Ann. Fac. Sci. Toulouse, Math. (6)}}, 2(3):429--440, 1993.

\bibitem[S{\"u}{\ss}18]{suss2018orbit}
Hendrik S{\"u}{\ss}.
\newblock Orbit spaces of maximal torus actions on oriented grassmannians of
  planes.
\newblock {\em arXiv preprint arXiv:1810.00981}, 2018.

\end{thebibliography}
\bibliographystyle{halpha}
\end{document}